\definecolor{edit}{HTML}{CC0000} 
\definecolor{edit2}{HTML}{1850DE}
\def\bc{\begin{center}}
\def\ec{\end{center}}
\def\s2c{\vskip 2cm}
\def\bt{\begin{Theorem}}
\def\et{\end{Theorem}}
\def\bd{\begin{Definition}}
\def\ed{\end{Definition}}
\def\bl{\begin{Lemma}}
\def\el{\end{Lemma}}
\def\bcor{\begin{Corollary}}
\def\ecor{\end{Corollary}}
\def\bpr{\begin{Proposition}}
\def\epr{\end{Proposition}}
\newtheorem{Lemma}{Lemma}[section]
\newtheorem{Theorem}[Lemma]{Theorem}
\newtheorem{Proposition}[Lemma]{Propostion}
\newtheorem{Definition}[Lemma]{Definition}
\newtheorem{Corollary}[Lemma]{Corollary}
\newtheorem{theorem}{Theorem}[section]
\newtheorem{lemma}[theorem]{Lemma}
\newtheorem{proposition}[theorem]{Proposition}
\newtheorem{definition}[theorem]{Definition}
\newtheorem{remark}[theorem]{Remark}
\def\elsartstyle{%
    \def\normalsize{\@setfontsize\normalsize\@xiipt{14.5}}
    \def\small{\@setfontsize\small\@xipt{13.6}}
    \let\footnotesize=\small
    \def\large{\@setfontsize\large\@xivpt{18}}
    \def\Large{\@setfontsize\Large\@xviipt{22}}
    \skip\@mpfootins = 18\p@ \@plus 2\p@
    \normalsize
} \@ifundefined{square}{}{} \makeatother
\author{ SUBHA PAL and Sarath Sasi$^*$ }
\date{}
\thanks{$^*$Corresponding author's e-mail:sarath@iitpkd.ac.in, \\{\bf Author name and affiliations:}\\
  Sarath Sasi, Department of Mathematics,\\
  Indian Institute of Technology Palakkad,\\
  Kerala, INDIA-678623.\\
 }
\begin{document}

\title{Principal eigenvalues and asymptotic behavior for the weighted $p$-Laplacian with Robin boundary conditions on exterior domains}

\maketitle \vskip .5cm \noindent {\bf Abstract:}
The spectral theory of the $p$-Laplacian is well-developed for classical Dirichlet and Neumann boundary conditions, but the transitional Robin regime on exterior domains remains a largely unexplored territory. This paper provides a comprehensive analysis of the weighted p-Laplacian eigenvalue problem with Robin boundary conditions on the exterior domain $B_1^c \subset \mathbb{R}^N$, with $N>p $, where the weight function belongs to the Lorentz space $F^{N/p}$ and decays at infinity. Under natural assumptions on the weight, we prove the existence, uniqueness, simplicity, and isolation of a positive principal eigenvalue and show the $C_{loc}^{1,\alpha} $ regularity of the associated eigenfunction. We further analyze the dependence of the principal eigenvalue on the Robin parameter  $\beta$ and recover the Neumann and Dirichlet limits as $\beta \to 0^+$ and $\beta \to \infty$, respectively. Our asymptotic analysis reveals a universal far-field decay rate $|x|^{-(N-p)/(p-1)}$ independent of the Robin parameter $\beta$, while the near-boundary structure exhibits explicit $\beta^{1/(p-1)}$ scaling. We then investigate the gradient behavior of the eigenfunction, showing the existence of a unique critical point $r_*$, and provide explicit quantitative bounds on both $r_*$ and the boundary value $\varphi(1)$ in terms of Robin parameter $\beta$. The central contribution of this paper is the derivation of unified gradient estimates that seamlessly connect the near-boundary and asymptotic regions by introducing a characteristic length scale $L \sim \beta^{-1/(N-1)}$. These estimates provide a global description of the eigenfunction’s gradient behavior on the entire exterior domain and quantify how the Robin parameter controls the penetration depth of boundary effects.

\vskip .3cm \noindent {\bf AMS Classification (2020):}{ 35P30, 35J92, 35B40}
\vskip .3cm \noindent {\bf Keywords}: {Robin eigenvalue problem, $p$-Laplacian, Exterior domain, Principal eigenvalue, Gradient estimates}


\section{Introduction}

The spectral theory of quasilinear elliptic operators on unbounded domains has attracted considerable attention due to its fundamental role in understanding the qualitative behavior of solutions to nonlinear partial differential equations. Of particular importance are eigenvalue problems involving the $p$-Laplacian operator $\Delta_p u := \text{div}(|\nabla u|^{p-2}\nabla u)$, which arises naturally in numerous applications including nonlinear elasticity, fluid dynamics, image processing, population dynamics, and heat transfer with convective boundary conditions. While the spectral properties of the $p$-Laplacian are well-understood for bounded domains and the entire space $\mathbb{R}^N$, the case of exterior domains with Robin boundary conditions remains largely unexplored.
The classical eigenvalue problem for the $p$-Laplacian with weight function $g$ takes the form
$$-\Delta_p u = \lambda g|u|^{p-2}u \quad \text{in } \Omega,$$
supplemented by appropriate boundary conditions. When $\Omega$ is a bounded domain, the existence of a principal eigenvalue and its associated positive eigenfunction is well-established under suitable conditions on the weight function $g$. For the entire space $\mathbb{R}^N$, the situation is more delicate, requiring careful analysis of the competition between the gradient energy and the weight function, particularly when $p<N$.

Exterior domains, defined as complements of bounded regions, present unique challenges that combine aspects of both bounded and unbounded domain theories. Unlike bounded domains where compactness arguments readily apply, or the entire space where translation invariance provides powerful tools, exterior domains require sophisticated techniques to handle both local boundary phenomena and global decay properties simultaneously. There have been many studies on the weighted eigenvalue problem for the Dirichlet $p$-Laplace operator on exterior domains. In \cite{ads2015} the existence of a first principal eigenvalue is investigated in the setting of Beppo-Levi space - the completion of  $C_c^\infty(B_1^c)$ with respect to the norm $\|\nabla u\|_p=\displaystyle{\int_{B_1^c}}|\nabla u|^p \rm{d}x$.
A further generalization of these results can be found in \cite{dhs2018}.
Subsequently, Chhetri and Dr\'{a}bek \cite{cd14}, provided detailed estimates about the behaviour of the eigenfunctions both near the boundary and at infinity, establishing precise decay rates that depend on the spatial dimension and the nonlinearity parameter.

Recently,  Anoop and Biswas \cite{anoop19} studied Neumann eigenvalue problems  on exterior domains, establishing existence and properties of principal eigenvalues for weight functions in certain Lorentz and weighted Lebesgue spaces.

However, the intermediate case of Robin boundary conditions, which naturally interpolates between Neumann ($\beta = 0$) and Dirichlet ($\beta = \infty$) conditions through the parameter $\beta > 0$, has remained unaddressed. Robin boundary conditions frequently arise in physical applications involving heat or mass transfer across boundaries, where the flux is proportional to the difference between the solution and some external reference value. Understanding how the Robin parameter $\beta$ influences the spectral properties provides crucial insight into the transition between different physical regimes.

In this paper, we provide a comprehensive analysis of the Robin eigenvalue problem
\begin{equation}\label{eq:main-intro}
\begin{cases}
-\Delta_p u = \lambda g|u|^{p-2}u & \text{in } B_1^c,\\
|\nabla u|^{p-2}\frac{\partial u}{\partial \nu} + \beta|u|^{p-2}u = 0 & \text{on } \partial B_1,
\end{cases}
\end{equation}
where $B_1^c = \mathbb{R}^N \setminus \overline{B_1}$ is the exterior of the closed unit ball, $1 < p < N$, $\beta > 0$ is the Robin parameter, $\nu$ is the outward unit normal to $\partial B_1$, and $g$ is a weight function belonging to the space $F^{N/p}$ (the closure of $C_c^\infty(B_1^c)$ in the Lorentz space $L^{N/p,\infty}(B_1^c)$).

Our first main contribution establishes the fundamental spectral properties of problem \eqref{eq:main-intro}, extending the variational framework to the Robin setting:

\begin{theorem}\label{thm:principal-intro}
Let $1<p<N$, and $g \in F^{N/p}$ with $g^+ \not\equiv 0$. Then for the eigenvalue problem \eqref{eq:main-intro}:

\begin{itemize}
    \item[(a)] \textbf{(Existence and Uniqueness)} The value
    \[
    \lambda_1 = \inf\left\{\int_{B_1^c} |\nabla \phi|^p + \beta\int_{\partial B_1} |\phi|^p : \phi \in W^{1,p}(B_1^c), \int_{B_1^c} g|\phi|^p = 1\right\}
    \]
    is achieved and defines the unique positive principal eigenvalue.
    
    \item[(b)] \textbf{(Boundedness)} Every eigenfunction $\phi$ satisfies $\|\phi\|_{L^\infty(B_1^c)} \leq C\|\phi\|_{W^{1,p}(B_1^c)}$.
    
    \item[(c)] \textbf{(Higher Regularity)} If additionally
$g \in L^q_{loc}(B_1^c)$ for some $q > \frac{Np}{p-1}$, then every eigenfunction belongs to $C^{1,\alpha}_{loc}(\overline{B_1^c})$ 
for some $\alpha \in (0,1)$.
    
    \item[(d)] \textbf{(Simplicity)} The principal eigenvalue $\lambda_1$ is simple.
    
    \item[(e)] \textbf{(Isolation)} The principal eigenvalue $\lambda_1$ is isolated in the spectrum.
\end{itemize}
\end{theorem}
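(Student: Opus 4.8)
The plan is to prove the five parts of Theorem~\ref{thm:principal-intro} in the natural logical order, using the direct method of the calculus of variations for the existence and then a sequence of standard but technically delicate elliptic arguments for the qualitative properties.

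\medskip

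\textbf{Part (a): Existence and Uniqueness.} First I would establish that the Rayleigh quotient
\[
R(\phi) = \frac{\int_{B_1^c} |\nabla \phi|^p + \beta\int_{\partial B_1} |\phi|^p}{\int_{B_1^c} g|\phi|^p}
\]
is well-defined and bounded below by a positive constant on the admissible set. The crucial ingredient here is a compact embedding that controls the weighted term $\int g|\phi|^p$ by the full energy $\|\phi\|_{W^{1,p}}^p$; since $g \in F^{N/p}$, a generalized H\"older inequality in Lorentz spaces together with the Sobolev embedding $W^{1,p}(B_1^c) \hookrightarrow L^{p^*,p}(B_1^c)$ yields $\int g|\phi|^p \le \|g\|_{N/p,\infty}\,\|\phi\|_{L^{p^*,p}}^p \le C\|g\|_{N/p,\infty}\,\|\phi\|_{W^{1,p}}^p$. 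This shows $\lambda_1 > 0$. For the infimum to be attained, I would take a minimizing sequence $\{\phi_n\}$ normalized by $\int g|\phi_n|^p = 1$; the numerator bound forces $\{\phi_n\}$ to be bounded in $W^{1,p}(B_1^c)$, so up to a subsequence $\phi_n \rightharpoonup \phi$ weakly. Weak lower semicontinuity of the convex functional $\int|\nabla \phi|^p$ and continuity of the boundary trace handle the numerator; the density of $C_c^\infty$ in $F^{N/p}$ is what upgrades weak convergence to convergence of the constraint $\int g|\phi_n|^p \to \int g|\phi|^p = 1$, so $\phi$ is a minimizer. The Euler--Lagrange equation recovers \eqref{eq:main-intro} with $\lambda = \lambda_1$. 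Replacing $\phi$ by $|\phi|$ leaves the quotient unchanged, so the minimizer may be taken nonnegative, and the strong maximum principle (applied after part (c) gives regularity) makes it strictly positive.

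\medskip

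\textbf{Part (b): Boundedness.} I would prove the $L^\infty$ bound by Moser iteration. Testing the weak formulation with truncated powers $\phi_k = \min(\phi, k)|\phi|^{s}$ and using the weighted term, the $F^{N/p}$ membership of $g$ is exactly the borderline integrability needed to close the iteration: the weight sits in the dual scaling of the Sobolev exponent, so the bad term can be absorbed after splitting $g$ into a large $L^{N/p}$-small part (by density) plus a bounded part. Iterating over the exponents $s_k$ yields the stated estimate $\|\phi\|_{L^\infty} \le C\|\phi\|_{W^{1,p}}$.

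\medskip

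\textbf{Parts (c)--(e): Regularity, Simplicity, Isolation.} For (c), once $\phi \in L^\infty_{loc}$, the equation $-\Delta_p \phi = \lambda g|\phi|^{p-2}\phi$ has a right-hand side in $L^{q/(p-1)}_{loc}$ with $q/(p-1) > N/p \cdot \ldots$; invoking the interior and boundary $C^{1,\alpha}$ regularity theory of Lieberman and DiBenedetto for the $p$-Laplacian (the Robin condition is a conormal-type boundary condition amenable to their results) gives $\phi \in C^{1,\alpha}_{loc}(\overline{B_1^c})$. For (d) simplicity, the cleanest route is the hidden convexity / Di\'az--Sa\'a inequality: if $u, v$ are two positive principal eigenfunctions, testing the equations for $u$ and $v$ with $(u^p - v^p)/u^{p-1}$ and $(v^p - u^p)/v^{p-1}$ respectively and summing yields, via the Picone-type inequality, a nonnegative quantity that vanishes only when $u$ and $v$ are proportional; the Robin boundary terms are handled by the same Picone identity applied on $\partial B_1$ and have the right sign. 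The main technical obstacle will be ensuring these test functions are admissible in $W^{1,p}(B_1^c)$ on the \emph{unbounded} domain --- this requires the far-field decay of eigenfunctions together with a cutoff-and-limit argument, and is the step where the exterior-domain geometry genuinely complicates the standard bounded-domain proof. Finally for (e) isolation, I would argue by contradiction: a sequence of eigenvalues $\lambda_n \downarrow \lambda_1$ with eigenfunctions $u_n$ (normalized in $W^{1,p}$) would, by parts (a)--(c), converge to a principal eigenfunction, hence an eigenfunction of one sign; but any eigenfunction associated to an eigenvalue $\lambda_n > \lambda_1$ must change sign (a consequence of the variational characterization and simplicity), and the nodal domain together with the Sobolev-Poincar\'e inequality forces a uniform lower bound on the measure of the nodal set, contradicting convergence to a one-signed function. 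I expect the simplicity proof, specifically the justification of the Picone test functions on the exterior domain, to be the central difficulty.
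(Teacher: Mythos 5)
Your parts (a)-existence, (b), (c), and (e) follow essentially the paper's route (direct method with compactness of the weighted constraint functional, Moser iteration handled through the Lorentz-space inequality of Lemma \ref{lem:hardy-sobolev}, DiBenedetto interior plus Lieberman boundary regularity, and isolation by contradiction). But there are two genuine gaps. The central one is simplicity, part (d), which you yourself flag as the crux: the D\'iaz--Sa\'a/Picone scheme with global test functions $(u^p-v^p)/u^{p-1}$ requires these quotients to lie in $W^{1,p}(B_1^c)$, and your proposed justification --- ``far-field decay of eigenfunctions together with a cutoff-and-limit argument'' --- rests on decay estimates that are not available under the hypotheses of Theorem \ref{thm:principal-intro}. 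The two-sided decay bounds of Theorem \ref{thm:asymptotic-intro}(a) require the extra assumption $0<g(x)\leq C_0|x|^{-l}$ with $l>p$; under $g\in F^{N/p}$ alone there is no comparability between the decay of $u$ and that of $v$, so the quotient $u^p/v^{p-1}$ cannot be controlled at infinity and the cutoff argument has nothing to lean on. The paper takes a purely local route that avoids admissibility altogether: Lemma \ref{lem:zeroset} shows the zero set $Z$ of a nonnegative eigenfunction has zero $W^{1,p}$-capacity; the Lucia--Prashanth identity gives $\nabla\phi\,\psi-\phi\nabla\psi=0$ a.e.\ in $B_1^c\setminus Z$; since a zero-capacity set cannot disconnect the domain, $\phi=c\psi$ globally, and the normalization $\int_{B_1^c} g|\phi|^p=\int_{B_1^c} g|\psi|^p=1$ together with positivity forces $c=1$. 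Relatedly, the ``uniqueness'' claim in (a) --- that no eigenvalue other than $\lambda_1$ admits a positive eigenfunction --- is never proved in your proposal (the paper proves it with Picone's identity, testing the equation of a hypothetical second principal eigenfunction $\psi$ with $\phi^p/\psi^{p-1}$); your isolation argument in (e) silently uses exactly this fact when asserting that eigenfunctions for $\lambda_n>\lambda_1$ must change sign, so the gap propagates.

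The second gap is positivity of the minimizer in (a). You invoke the strong maximum principle ``applied after part (c) gives regularity,'' but (c) holds only under the additional hypothesis $g\in L^q_{loc}(B_1^c)$ with $q>\frac{Np}{p-1}$, which is not assumed in (a), (d), or (e); making positivity depend on it would silently weaken the theorem. The paper's Proposition \ref{prop:smp} is capacity-based (via Lemma \ref{lem:zeroset}) and needs only $g\in F^{N/p}$, so positivity --- which both the uniqueness and simplicity arguments require as input --- is available already at the variational stage, with no regularity theory at all.
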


The proof of Theorem \ref{thm:principal-intro} requires overcoming significant technical challenges related to the lack of compactness in the exterior domain setting. For the existence and isolation parts, we adapt the variational framework from Anoop and Biswas \cite{anoop19}, utilizing their compactness result (Proposition 4.2) which establishes that the weight functional $G(u) = \int_{B_1^c} g|u|^p$ is compact on $W^{1,p}(B_1^c)$ for weights in $F^{N/p}$. We also follow their approach for verifying the Palais-Smale condition, with appropriate modifications to handle the Robin boundary term $\beta\int_{\partial B_1}|u|^p$ in the energy functional. The key novelty lies in the uniqueness argument, which requires careful application of Picone's identity to the Robin setting, and the simplicity proof, where the Robin boundary conditions necessitate a detailed analysis to verify that distinct eigenfunctions must be scalar multiples. The regularity analysis combines interior regularity techniques for quasilinear elliptic equations with boundary regularity theory for Robin conditions, employing Moser iteration adapted to the exterior domain setting for boundedness and Lieberman's theory for $C^{1,\alpha}$ estimates near the Robin boundary.

\vspace{.3cm}

\noindent\textbf{Parameter dependence and limiting behavior.} Having established the existence and fundamental properties of the principal eigenvalue $\lambda_1$ for each fixed Robin parameter $\beta > 0$, a natural question arises: how do $\lambda_1$ and the corresponding eigenfunction $\phi_1$ vary as $\beta$ changes? This question is particularly important for understanding the transition between classical boundary conditions, as the Robin problem interpolates between Neumann conditions ($\beta \to 0^+$) and Dirichlet conditions ($\beta \to \infty$).

Following the framework developed by Drábek and Rasouli \cite{dr10} for domains 
of finite measure and adapting it to the exterior domain setting, we conclude that the mapping $\beta \mapsto \lambda_1(\beta)$ is concave, strictly increasing, and continuously differentiable on $(0,\infty)$, with explicit derivative formula
$$\frac{d\lambda_1}{d\beta} = \int_{\partial B_1} \phi_1(\beta)^p \, dH^{N-1}.$$
Moreover, the eigenfunction mapping $\beta \mapsto \phi_1(\beta)$ is continuous from $(0,\infty)$ into $W^{1,p}(B_1^c)$. The limiting behavior exhibits a fundamental distinction from the bounded domain case: as $\beta \to 0^+$, we recover the Neumann eigenvalue $\lambda_1^N > 0$ of Anoop-Biswas \cite{anoop19}, which is strictly positive in exterior domains. As $\beta \to \infty$, we recover the Dirichlet eigenvalue $\lambda_1^D$ of Chhetri-Drábek \cite{cd14}, with strict inequality $\lambda_1(\beta) < \lambda_1^D$ for all finite $\beta$. The complete analysis of these parameter-dependence properties and the limiting behavior, is presented in Remark \ref{rem:beta_dependence} following Theorem \ref{thm:principal-intro}.

With the spectral properties and parameter-dependence structure established, we turn to the asymptotic and near boundary behavior of eigenfunctions.
The next result provides precise asymptotic behavior of eigenfunctions, revealing how the Robin parameter $\beta$ influences both the behaviour of solutions near the boundary  and far-field decay:

\begin{theorem}[Asymptotic Behavior]\label{thm:asymptotic-intro}
Let $1<p<N$ and $g \in F^{N/p}$ with $g^+ \not\equiv 0$. Let $\phi$ be the positive principal eigenfunction corresponding to eigenvalue $\lambda_1$ of problem \eqref{eq:main-intro}. Then:

\begin{itemize}
    \item[(a)] \textbf{(General Decay at Infinity)} Suppose $r_0 > 1$, $l>p$ and there exists a constant $C_0>0$ such that $0 < g(x) \leq C_0|x|^{-l}$ for $|x| \geq r_0$. Then there exist constants $0 < C_1 < C_2$ such that for all $x \in B_{r_0}^c$:
    \begin{align}
    \frac{C_1}{|x|^{(N-p)/(p-1)}} \leq \phi(x) \leq \frac{C_2}{|x|^{(N-p)/(p-1)}} \label{eq:general_decay-intro}
    \end{align}
    
    \item[(b)] \textbf{(Radial Symmetry)} If additionally $g(x)$ is radially symmetric, then the principal eigenfunction $\phi$ is radial, i.e. there exists $\varphi:[1,\infty)\to \mathbb{R}$ such that $\phi(x) = \varphi(|x|)$ for all $x \in B_1^c$.
        
    \item[(c)] \textbf{(Boundary Estimates)} Under the assumption of part (b), there exist constants $K > 0$ and $\alpha \in (0,1)$ such that for all $x \in B_1^c$ with $t := |x| - 1$ sufficiently small:
    \begin{align}
    \varphi(1)(1 + \beta^{1/(p-1)} t) - K t^{1+\alpha} &\leq \phi(x) \leq \varphi(1)(1 + \beta^{1/(p-1)} t) + K t^{1+\alpha} \label{eq:boundary_bounds}
    \end{align}
\end{itemize}
\end{theorem}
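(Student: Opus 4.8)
The plan is to handle the three parts separately, leaning on the spectral facts from Theorem~\ref{thm:principal-intro}: positivity and simplicity of $\phi$, the global $L^\infty$ bound, and $C^{1,\alpha}_{loc}(\overline{B_1^c})$ regularity. The organizing observation for part~(a) is that $\gamma := (N-p)/(p-1)$ is exactly the decay exponent of the radial $p$-harmonic function: a direct computation gives $-\Delta_p(|x|^{-\gamma}) = 0$, while for $\sigma \neq \gamma$ one has $-\Delta_p(|x|^{-\sigma}) = \sigma^{p-1}\bigl(N-1-(\sigma+1)(p-1)\bigr)|x|^{-(\sigma+1)(p-1)-1}$, whose sign is that of $\gamma-\sigma$. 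Since $0 < g \le C_0|x|^{-l}$ with $l>p$ forces the reaction $\lambda_1 g\phi^{p-1}$ to decay strictly faster than any $p$-harmonic profile, $\phi$ should be trapped between multiples of $|x|^{-\gamma}$. For the lower bound I would use that $-\Delta_p \phi = \lambda_1 g\phi^{p-1}\ge 0$ on $B_{r_0}^c$, so $\phi$ is $p$-superharmonic there; setting $m_0 := \min_{\partial B_{r_0}}\phi > 0$ and comparing with the radial $p$-harmonic function $h(x) = m_0(|x|/r_0)^{-\gamma}$ (which satisfies $-\Delta_p h = 0$, $h \le \phi$ on $\partial B_{r_0}$, and $h\to 0$ at infinity), the weak comparison principle for the \emph{homogeneous} $p$-Laplacian (a monotone operator, so comparison via testing with $(h-\phi)^+$ is valid even on the unbounded set) yields $\phi \ge h \ge C_1|x|^{-\gamma}$.

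The upper bound is the crux and where I expect the main difficulty. The naive barrier $C_2|x|^{-\gamma}$ fails because $-\Delta_p(C_2|x|^{-\gamma})=0$ cannot dominate the strictly positive reaction, so it is not a supersolution. I would circumvent this by a finite bootstrap on the decay exponent: starting from $\phi\le M$ (Theorem~\ref{thm:principal-intro}(b)) and freezing the source as $f := \lambda_1 g\phi^{p-1}\le C\,|x|^{-l}\phi^{p-1}$, one estimates the \emph{fixed-source} problem $-\Delta_p W = C|x|^{-s}$ by the radial ansatz $W\sim |x|^{-a}$ and uses standard comparison for $-\Delta_p$ with fixed right-hand side to improve the decay from $|x|^{-a_k}$ to $|x|^{-a_{k+1}}$ with $a_{k+1}=a_k+(l-p)/(p-1)$. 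After finitely many steps the candidate exponent overshoots $\gamma$, at which point the particular solution decays faster than the homogeneous mode $|x|^{-\gamma}$, so the latter caps the rate and gives $\phi\le C_2|x|^{-\gamma}$; matching this with the $L^\infty$ bound on the annulus $r_0\le |x|\le R$ completes~\eqref{eq:general_decay-intro}. An alternative is the single perturbed supersolution $A|x|^{-\gamma}-B|x|^{-\mu}$ with $0<\mu-\gamma\le l-p$, whose $-\Delta_p$ is positive and decays slower than the reaction; but this route then requires justifying comparison for the reaction equation itself via the smallness of the weight far out. In either case the delicacy is the interplay between the $u$-dependence of the reaction and comparison on an unbounded domain.

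Part~(b) follows from simplicity. For any $R\in O(N)$ the rotated function $\phi_R(x) := \phi(Rx)$ is again a positive eigenfunction for $\lambda_1$ satisfying the same Robin condition, since $\Delta_p$, the domain $B_1^c$, the boundary $\partial B_1$, and — by radiality of $g$ — the weight are all $O(N)$-invariant; moreover the constraint $\int_{B_1^c} g|\phi_R|^p = 1$ is preserved under the change of variables $y=Rx$. Simplicity (Theorem~\ref{thm:principal-intro}(d)) forces $\phi_R = c(R)\phi$, and equality of two normalized positive functions gives $c(R)=1$, so $\phi$ is $O(N)$-invariant, i.e. $\phi(x)=\varphi(|x|)$ for a well-defined $\varphi$.

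For part~(c), write $t=|x|-1$ and use that $\phi=\varphi(|x|)$ is $C^{1,\alpha}$ up to $\partial B_1$ by Theorem~\ref{thm:principal-intro}(c). The outward normal of the exterior domain points toward the origin, $\nu=-\hat r$, so $\partial_\nu\phi=-\varphi'(1)$; substituting into the Robin condition $|\nabla u|^{p-2}\partial_\nu u+\beta|u|^{p-2}u=0$ gives $-|\varphi'(1)|^{p-2}\varphi'(1)+\beta\varphi(1)^{p-1}=0$, which (since $\varphi(1)>0$) forces $\varphi'(1)>0$ and yields the identity $\varphi'(1)=\beta^{1/(p-1)}\varphi(1)$. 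Hölder continuity of $\varphi'$ gives $\varphi'(r)=\varphi'(1)+O(t^{\alpha})$, and integrating produces the first-order Taylor expansion $\varphi(1+t)=\varphi(1)+\varphi'(1)t+O(t^{1+\alpha})$. Inserting the boundary identity yields $\phi(x)=\varphi(1)\bigl(1+\beta^{1/(p-1)}t\bigr)+O(t^{1+\alpha})$, which is exactly~\eqref{eq:boundary_bounds} for a suitable constant $K$.
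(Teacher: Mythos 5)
Parts (b) and (c) of your proposal are correct and essentially identical to the paper's own arguments: rotation invariance of $J$ and $G$ plus simplicity gives radiality, and the Robin condition with $\nu=-\hat r$ gives $\varphi'(1)=\beta^{1/(p-1)}\varphi(1)$, which the $C^{1,\alpha}$ Taylor expansion converts into \eqref{eq:boundary_bounds}. One small omission: you write ``since $\varphi(1)>0$'' without justification; positivity a.e.\ in the open domain does not by itself exclude $\varphi(1)=0$ at the boundary, and the paper invokes V\'azquez's strong maximum principle \cite{vaz84} at exactly this point. Your lower bound in (a), via the $p$-harmonic barrier $h(x)=m_0(|x|/r_0)^{-\gamma}$, is also sound, and the comparison there survives the unboundedness of the domain because $(h-\phi)^+\le h$ decays like $|x|^{-\gamma}$, so the cutoff errors vanish; this is in essence a self-contained proof of the result the paper simply cites, \cite[Proposition 2.6]{veron01}.

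The genuine gap is in the upper bound of (a). Every comparison you invoke on $B_{r_0}^c$ needs a condition at infinity for the function being dominated, and $\phi\le M$ is not enough: constants are $p$-harmonic, so ``$-\Delta_p u\le-\Delta_p\Psi$ in $B_{r_0}^c$ and $u\le\Psi$ on $\partial B_{r_0}$ imply $u\le\Psi$'' is simply false on an exterior domain without decay of $u$, and the cutoff error terms in the weak-testing argument do not tend to zero when $u$ is merely bounded. This is precisely why the paper, before appealing to \cite[Theorem 4]{avila08}, first proves $\phi(x)\to 0$ uniformly as $|x|\to\infty$ using Serrin's local estimates \cite{serrin64} together with $\phi\in L^{p^*}(B_1^c)$; your bootstrap omits this step and therefore cannot get started. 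A second, independent flaw is the concluding step of the bootstrap: ``the particular solution decays faster than the homogeneous mode $|x|^{-\gamma}$, so the latter caps the rate'' tacitly superposes a particular and a homogeneous solution, which is not legitimate for the nonlinear operator $\Delta_p$. The correct device is exactly the perturbed barrier $A|x|^{-\gamma}-B|x|^{-\mu}$ with $0<\mu-\gamma\le l-p$ that you relegate to an ``alternative'': one computes $-\Delta_p\bigl(A|x|^{-\gamma}-B|x|^{-\mu}\bigr)\sim c\,|x|^{-N-(\mu-\gamma)}>0$ directly and compares against the \emph{frozen} source $\lambda_1 g\phi^{p-1}$ (so that only the monotone-operator comparison is needed, never a comparison principle for the reaction equation itself, which fails at eigenvalues). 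With the Serrin decay step added and the perturbed barrier worked out, your argument becomes a self-contained version of the Á vila--Brock result the paper cites; as written, it does not close.
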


\noindent Theorem \ref{thm:asymptotic-intro} reveals several expected features. The far-field decay rate \eqref{eq:general_decay-intro} is independent of the Robin parameter $\beta$, confirming that boundary conditions do not influence asymptotic behavior at infinity. In contrast, the near-boundary behavior \eqref{eq:boundary_bounds} explicitly depends on $\beta^{1/(p-1)}$, showing how the Robin parameter controls the boundary layer structure. The transition between these regimes suggests the existence of a characteristic length scale that governs the spatial extent of boundary effects.

To make this transition precise, we first establish gradient estimates that quantify how $|\nabla\phi|$ behaves near the boundary and at infinity. These estimates not only reveal the existence of a unique critical point $r_* > 1$ where the eigenfunction attains its maximum, but also provide the technical foundation for the complete characterization.

\begin{theorem}[Gradient Estimates]\label{thm:gradient-near-intro}
Let $1<p<N$ and let $g \in F^{N/p}\cap L^{N/p}(B_1^c)$ be radially symmetric with $g(x) > 0$ for all $x \in B_1^c$. Further assume that there exist constants $l>p$ and $C_0>0$ such that $0 < g(x) \leq C_0|x|^{-l}$ for $|x| \geq 1$. Let $\phi(x)=\varphi(|x|)$ be the positive principal eigenfunction corresponding to eigenvalue $\lambda_1$ of problem \eqref{eq:main-intro}. Then:

\begin{itemize}
    \item[(a)] \textbf{(Gradient estimate near boundary)} There exist constants $0 < m < M$, $C > 0$, and $\delta > 0$ such that for all $x \in B_1^c$ with $\mathrm{dist}(x,\partial B_1) < \delta$:
    \begin{align*}
    m\beta^{1/(p-1)}\varphi(1) &\leq |\nabla \phi(x)| \leq M\beta^{1/(p-1)}\varphi(1)\left(1 + C\cdot\mathrm{dist}(x,\partial B_1)^{\alpha}\right)
    \end{align*}
    where $\alpha \in (0,1)$ is the Hölder exponent from Theorem \ref{thm:principal-intro}.

    \item[(b)] \textbf{(Gradient estimate near infinity)} There exists a unique $r_*>1$ such that $\varphi'(r_*) = 0$. Moreover, there exist constants $0 < \hat{C}_1 < \hat{C}_2$ such that for all $x \in B_{2r_*}^c$:
    \begin{align*}
    \hat{C}_1|x|^{-(N-1)/(p-1)} &\leq |\nabla \phi(x)| \leq \hat{C}_2(\log |x|)^{(N-p)/(N(p-1))}|x|^{-(N-1)/(p-1)}
    \end{align*}
\end{itemize}
\end{theorem}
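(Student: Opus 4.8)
The plan is to use radial symmetry (Theorem~\ref{thm:asymptotic-intro}(b)) to pass to an ODE for $\varphi$ and to control $|\nabla\phi|=|\varphi'|$ through the monotone \emph{flux} $\psi(r):=r^{N-1}|\varphi'(r)|^{p-2}\varphi'(r)$. Writing $r=|x|$, the equation becomes $(r^{N-1}|\varphi'|^{p-2}\varphi')'=-\lambda_1 r^{N-1}g(r)\varphi^{p-1}$, i.e. $\psi'(r)=-\lambda_1 r^{N-1}g(r)\varphi^{p-1}<0$ since $g>0$ and $\varphi>0$; thus $\psi$ is \emph{strictly decreasing}. At $r=1$ the outward normal of $B_1^c$ is $-\hat r$, so $\partial_\nu\phi=-\varphi'(1)$ and the Robin condition reads $|\varphi'(1)|^{p-2}(-\varphi'(1))=-\beta\varphi(1)^{p-1}<0$, whose left-hand side has the opposite sign to $\varphi'(1)$; hence $\varphi'(1)>0$ and \[|\nabla\phi|=\varphi'(1)=\beta^{1/(p-1)}\varphi(1)\quad\text{on }\partial B_1,\] which is the source of the scaling factor $\beta^{1/(p-1)}\varphi(1)$ in part~(a).

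For part~(a) I would invoke the regularity $\phi\in C^{1,\alpha}_{loc}(\overline{B_1^c})$ from Theorem~\ref{thm:principal-intro}(c), so that $\nabla\phi$ is $\alpha$-Hölder up to $\partial B_1$. For $x$ with $t:=\mathrm{dist}(x,\partial B_1)$ small, let $x_0\in\partial B_1$ be the nearest point, so $|x-x_0|=t$; the triangle inequality together with the boundary identity gives $\bigl|\,|\nabla\phi(x)|-\beta^{1/(p-1)}\varphi(1)\,\bigr|\le[\nabla\phi]_{C^\alpha}\,t^{\alpha}$. The upper bound then follows with $M=1$ and $C=[\nabla\phi]_{C^\alpha}/(\beta^{1/(p-1)}\varphi(1))$, and for $t<\delta$ small enough the lower bound holds with $m=\tfrac12$.

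For part~(b) I would first pin down $r_*$. Since $\psi$ is strictly decreasing and $\psi(1)=\beta\varphi(1)^{p-1}>0$, while the decay $\varphi\le C_2 r^{-(N-p)/(p-1)}\to0$ of Theorem~\ref{thm:asymptotic-intro}(a) forbids $\varphi$ from being monotone increasing on all of $[1,\infty)$, the flux must become negative; by strict monotonicity it has a unique zero $r_*>1$, with $\varphi'(r_*)=0$ and $\varphi'<0$ for $r>r_*$. The hypothesis $l>p$ together with the decay makes $A:=\lambda_1\int_{r_*}^{\infty}s^{N-1}g(s)\varphi(s)^{p-1}\,ds$ finite and strictly positive, so that $-\psi(r)=\lambda_1\int_{r_*}^{r}s^{N-1}g\varphi^{p-1}\,ds\uparrow A$. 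For $r\ge 2r_*$ this yields the floor $-\psi(r)\ge-\psi(2r_*)=:c_1>0$, and since $|\varphi'(r)|=(-\psi(r))^{1/(p-1)}r^{-(N-1)/(p-1)}$ the lower bound follows with $\hat C_1=c_1^{1/(p-1)}$.

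The upper bound in part~(b) is where the logarithm and the main difficulty appear. Using $\varphi(s)\le C_2 s^{-(N-p)/(p-1)}$ I would estimate \[|\varphi'(r)|^{p-1}=\frac{\lambda_1}{r^{N-1}}\int_{r_*}^{r}s^{N-1}g\varphi^{p-1}\,ds\le\frac{\lambda_1 C_2^{p-1}}{r^{N-1}}\int_{r_*}^{r}s^{p-1}g(s)\,ds,\] and then apply Hölder's inequality to $\int_{r_*}^r s^{p-1}g\,ds$ with the \emph{critical} exponent $N/p$ (and its conjugate $N/(N-p)$), splitting the weight as $g\,s^{p-1}=\bigl(g^{N/p}s^{N-1}\bigr)^{p/N}\cdot s^{\,p-1-(N-1)p/N}$. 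A direct computation shows $\bigl(p-1-(N-1)p/N\bigr)\cdot\tfrac{N}{N-p}=-1$, so the conjugate integral is exactly $\int_{r_*}^r s^{-1}\,ds=\log(r/r_*)$, while the first factor is controlled by $C\|g\|_{L^{N/p}}$. Hence $\int_{r_*}^r s^{p-1}g\,ds\le C\|g\|_{L^{N/p}}(\log r)^{(N-p)/N}$, and taking $(p-1)$-th roots produces the stated factor $(\log|x|)^{(N-p)/(N(p-1))}$. The crux is precisely this borderline estimate: the membership $g\in L^{N/p}$ sits exactly at the scale where the weighted integral degenerates to $\int s^{-1}ds$, and it is this degeneracy that forces the logarithmic correction. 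Verifying that the exponent produced is $(N-p)/(N(p-1))$ and that the constants stay uniform is the main technical obstacle, whereas the lower bound, the existence and uniqueness of $r_*$, and the near-boundary estimate are comparatively soft consequences of the flux monotonicity and the $C^{1,\alpha}$ theory.
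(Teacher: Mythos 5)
Your proposal is correct and follows essentially the same route as the paper: radial reduction to the ODE, monotonicity of the flux $r^{N-1}|\varphi'|^{p-2}\varphi'$ for existence/uniqueness of $r_*$, the Robin identity $\varphi'(1)=\beta^{1/(p-1)}\varphi(1)$ plus $C^{1,\alpha}$ Hölder continuity of $\nabla\phi$ for part (a), and the integrated flux identity combined with the decay bound $\varphi\le C_2 s^{-(N-p)/(p-1)}$ and the critical Hölder inequality (exponents $N/p$, $N/(N-p)$) for part (b). The only notable difference is that you carry out the borderline Hölder computation yielding $\int_{r_*}^r s^{-1}\,ds=\log(r/r_*)$ explicitly, whereas the paper cites Chhetri--Dr\'abek for that step, and your lower bound uses the flux floor $-\psi(2r_*)>0$ directly rather than the paper's explicit integral $I_0$; both are sound.
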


\noindent Theorem \ref{thm:gradient-near-intro} establishes the local gradient structure and identifies the critical point $r_*$ as the unique location where the eigenfunction achieves its maximum. However, the dependence of both $r_*$ and the boundary value $\varphi(1)$ on the Robin parameter $\beta$ remains implicit. Our next result provides explicit quantitative bounds that reveal how $\beta$ controls both the magnitude of the eigenfunction at the boundary and the spatial location of its maximum.

\begin{theorem}[Boundary Value and Critical Point Characterization]\label{thm:critical-intro}
Let $1<p<N$ and let $g \in F^{N/p}\cap L^{N/p}(B_1^c)$ be radially symmetric with $g(x) > 0$ for all $x \in B_1^c$. Let $\phi(x) = \varphi(|x|)$ be the positive principal eigenfunction corresponding to eigenvalue $\lambda_1$ of problem \eqref{eq:main-intro}, and let $r_* > 1$ denote the unique critical radius where $\varphi'(r_*) = 0$ established in Theorem \ref{thm:gradient-near-intro}(b). Then:

\begin{itemize}
    \item[(a)] \textbf{(Bounds on Boundary Value)} Assume there exist constants $C_0 > 0$ and $l > N$ such that $0 < g(x) \leq C_0|x|^{-l}$ for $|x| \geq 1$. Then:
    \begin{equation}\label{eq:phi1_bounds-intro}
    \frac{C_g}{1 + \frac{p-1}{N-p}\beta^{1/(p-1)}} \leq \varphi(1) < \left(\frac{\lambda_1}{\beta\omega_{N-1}}\right)^{1/(p-1)}
    \end{equation}
    where $C_g = \left(\omega_{N-1}\int_1^\infty r^{N-1}g(r)\,dr\right)^{-1/p}$ and $\omega_{N-1}$ is the surface area of the unit sphere in $\mathbb{R}^N$.
    
    \item[(b)] \textbf{(Bounds on Critical Point)} The critical radius $r_*$ satisfies:
    \begin{equation}\label{eq:rstar_bounds-intro}
    \left(1 + \frac{N\beta}{\lambda_1 g_{\max}\left[1 + \frac{p-1}{N-p}\beta^{1/(p-1)}\right]^{p-1}}\right)^{1/N} \leq r_* \leq \left(1 + \frac{N\beta}{\lambda_1 g_{\min}}\right)^{1/N},
    \end{equation}
    where $g_{\min} = \min_{s \in [1,r_*]} g(s)$ and $g_{\max} = \max_{s \in [1,r_*]} g(s)$.
\end{itemize}
\end{theorem}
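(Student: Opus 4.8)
The plan is to work entirely with the radial ODE satisfied by $\varphi$ and to extract two scalar identities that pin down $\varphi(1)$ and $r_*$. Writing $\phi(x)=\varphi(|x|)$, the equation $-\Delta_p\phi=\lambda_1 g\varphi^{p-1}$ reduces to $-\big(r^{N-1}|\varphi'|^{p-2}\varphi'\big)'=\lambda_1 r^{N-1}g(r)\varphi^{p-1}$ on $(1,\infty)$, and the Robin condition (with $\nu=-\hat{r}$, the outward normal of $B_1^c$) becomes $\varphi'(1)=\beta^{1/(p-1)}\varphi(1)$. Introduce the radial flux $\psi(r):=r^{N-1}|\varphi'(r)|^{p-2}\varphi'(r)$. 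First I would record its two structural properties: $\psi'(r)=-\lambda_1 r^{N-1}g(r)\varphi(r)^{p-1}<0$ (since $g>0$ and $\varphi>0$), so $\psi$ is strictly decreasing, and $\psi(1)=\beta\varphi(1)^{p-1}$ from the boundary relation while $\psi(r_*)=0$ at the critical radius of Theorem \ref{thm:gradient-near-intro}(b). Integrating $\psi'$ from $1$ to $r_*$ then gives the master flux identity
$$\beta\varphi(1)^{p-1}=\lambda_1\int_1^{r_*}s^{N-1}g(s)\varphi(s)^{p-1}\,ds,$$
which is the single relation driving every bound below.

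The technical heart is an a priori envelope for $\varphi$. Since $\psi$ is decreasing, $\psi(r)\le\psi(1)=\beta\varphi(1)^{p-1}$ on $(1,r_*)$, and because $\varphi'>0$ there this reads $\varphi'(r)\le\beta^{1/(p-1)}\varphi(1)\,r^{-(N-1)/(p-1)}$. Integrating and using $N>p$ — which makes the tail integral converge with the exact constant $\int_1^\infty s^{-(N-1)/(p-1)}\,ds=\frac{p-1}{N-p}$ — yields the key pointwise bound $\varphi(r_*)\le\varphi(1)\big(1+\tfrac{p-1}{N-p}\beta^{1/(p-1)}\big)$. This inequality ties the near-boundary scaling $\beta^{1/(p-1)}$ to the global maximum and is reused in both parts.

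For part (a) I fix the normalization $\int_{B_1^c}g\phi^p=1$ (so $\omega_{N-1}\int_1^\infty s^{N-1}g\varphi^p=1$), under which $C_g$ is finite precisely because $l>N$ forces $\int_1^\infty s^{N-1}g\,ds<\infty$. The lower bound follows by estimating $1=\omega_{N-1}\int_1^\infty s^{N-1}g\varphi^p\le\omega_{N-1}\varphi(r_*)^p\int_1^\infty s^{N-1}g=\varphi(r_*)^p/C_g^p$, giving $C_g\le\varphi(r_*)$, and then substituting the envelope. The upper bound is the easier half: applying Hölder's inequality with exponents $p/(p-1)$ and $p$ to the right side of the flux identity against the normalization produces the $(p-1)$-th-root dependence $\varphi(1)<\big(\lambda_1/(\beta\omega_{N-1})\big)^{1/(p-1)}$, the strict inequality arising because restricting the mass integral to $[1,r_*]$ discards a strictly positive tail.

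For part (b) I bound the integrand in the flux identity on $[1,r_*]$ using $g_{\min}\le g\le g_{\max}$ and $\varphi(1)\le\varphi\le\varphi(r_*)$ (monotonicity of $\varphi$ on $[1,r_*]$), together with $\int_1^{r_*}s^{N-1}\,ds=(r_*^N-1)/N$. Using $g\ge g_{\min}$ and $\varphi\ge\varphi(1)$ gives $\beta\ge\lambda_1 g_{\min}(r_*^N-1)/N$, i.e. the upper bound on $r_*$; using $g\le g_{\max}$, $\varphi\le\varphi(r_*)$, and then the envelope to replace $\varphi(r_*)^{p-1}$ by $\varphi(1)^{p-1}\big(1+\tfrac{p-1}{N-p}\beta^{1/(p-1)}\big)^{p-1}$ gives the lower bound, the factors $\varphi(1)^{p-1}$ cancelling so that no normalization is needed here. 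The main obstacle is the envelope estimate: it requires the sign analysis of $\psi$, the validity of integrating the flux (justified because $\psi\in C^1$ by the equation and the $C^{1,\alpha}_{loc}$ regularity of Theorem \ref{thm:principal-intro}), and the dimensional condition $N>p$ to close the convergent tail integral with the sharp constant $\frac{p-1}{N-p}$.
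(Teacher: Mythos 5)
Most of your proposal is, step for step, the paper's own argument: the flux function $F(r)=r^{N-1}|\varphi'(r)|^{p-2}\varphi'(r)$ with $F(1)=\beta\varphi(1)^{p-1}$ and $F(r_*)=0$, the master identity $\beta\varphi(1)^{p-1}=\lambda_1\int_1^{r_*}s^{N-1}g\varphi^{p-1}\,ds$, the envelope $\varphi(r)\le\varphi(1)\bigl(1+\tfrac{p-1}{N-p}\beta^{1/(p-1)}\bigr)$ obtained from the monotonicity of $F$, the part (a) lower bound via the normalization and the fact that $\varphi(r_*)$ is the global maximum, and both halves of part (b) with the cancellation of $\varphi(1)^{p-1}$ — all of these coincide with the paper's proof. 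The one place where you genuinely deviate is the upper bound in part (a), and there your argument has a real gap.

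Hölder with exponents $p/(p-1)$ and $p$ applied to the right-hand side of the flux identity gives
\begin{equation*}
\int_1^{r_*}s^{N-1}g\varphi^{p-1}\,ds\;\le\;\Bigl(\int_1^{r_*}s^{N-1}g\varphi^{p}\,ds\Bigr)^{(p-1)/p}\Bigl(\int_1^{r_*}s^{N-1}g\,ds\Bigr)^{1/p}\;<\;\omega_{N-1}^{-(p-1)/p}\,I_g^{1/p},
\end{equation*}
where $I_g=\int_1^\infty s^{N-1}g\,ds$. Feeding this into $\beta\varphi(1)^{p-1}=\lambda_1\int_1^{r_*}s^{N-1}g\varphi^{p-1}\,ds$ yields $\varphi(1)<(\lambda_1/\beta)^{1/(p-1)}\,\omega_{N-1}^{-1/p}\,I_g^{1/(p(p-1))}$: the weight factor $I_g^{1/(p(p-1))}$ does not cancel and the power of $\omega_{N-1}$ is $-1/p$ rather than $-1/(p-1)$, so the clean bound $\bigl(\lambda_1/(\beta\omega_{N-1})\bigr)^{1/(p-1)}$ does not follow from this computation. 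The paper avoids Hölder entirely: since $\varphi(s)\ge\varphi(1)$ on $[1,r_*]$, it uses the pointwise inequality $\varphi(s)^{p-1}\le\varphi(s)^p/\varphi(1)$, extends the integral to $(1,\infty)$, and applies the normalization, which gives $\beta\varphi(1)^{p}<\lambda_1/\omega_{N-1}$, i.e.\ $\varphi(1)<\bigl(\lambda_1/(\beta\omega_{N-1})\bigr)^{1/p}$ — note the exponent $1/p$, not the $1/(p-1)$ printed in the statement. In fact no correct argument can deliver the printed exponent: replacing $g$ by $cg$ multiplies $\lambda_1$ by $c^{-1}$ and the normalized eigenfunction by $c^{-1/p}$, so the left side of \eqref{eq:phi1_bounds-intro} scales like $c^{-1/p}$ while its right side scales like $c^{-1/(p-1)}$, and the claimed inequality fails for large $c$. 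The exponent in the statement is evidently a typo for $1/p$; you should prove the $1/p$ bound by the paper's pointwise-monotonicity route rather than trying to manufacture $1/(p-1)$ via Hölder.
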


\noindent Theorem \ref{thm:critical-intro} provides explicit bounds on the boundary value and critical point location, revealing how the Robin parameter $\beta$ influences the eigenfunction's magnitude and geometry. However, understanding the complete spatial structure of $|\nabla\phi|$ throughout the exterior domain requires a unified framework that seamlessly interpolates between the near-boundary regime (where gradients scale as $\beta^{1/(p-1)}$), the intermediate transition region, and the far-field asymptotic regime (where gradients decay as $|x|^{-(N-1)/(p-1)}$).

Our final and most technical result addresses this challenge through the introduction of a characteristic length scale $L \sim \beta^{-1/(N-1)}$ and carefully constructed transition functions. This framework reveals how boundary effects penetrate into the domain with a well-defined spatial extent controlled by $\beta$, and how the critical point $r_*$ creates a natural partition of the domain into growth and decay phases:

\begin{theorem}[Unified Gradient Estimates]\label{thm:unified-intro}
Let $1<p<N$ and let $g \in F^{N/p}\cap L^{N/p}(B_1^c)$ be radially symmetric with $g(x) > 0$ for all $x \in B_1^c$. Further assume that there exist constants $l>p$ and $C_0>0$ such that $0 < g(x) \leq C_0|x|^{-l}$ for $|x| \geq 1$. Let $\phi(x)=\varphi(|x|)$ be the positive principal eigenfunction corresponding to eigenvalue $\lambda_1$ of problem \eqref{eq:main-intro}.

Let $0 < \beta_0 < \beta_1 < \infty$ and $\delta>0$ be sufficiently small. For $\beta\in[\beta_0, \beta_1]$, define $L = \beta^{-1/(N-1)}$ and $\sigma(r) := \left(\frac{|r - r_*|}{|r - r_*| + \delta}\right)^{1/(p-1)}$. Then there exist constants $C_1(\beta), C_2(\beta) > 0$ such that for all $x \in B_1^c$:
\begin{align*}
|\nabla\phi(x)| &\geq C_1 \cdot \sigma(|x|) \cdot \Big[\tau(|x|-1) \beta^{1/(p-1)} + (1 - \tau(|x|-1)) |x|^{-(N-1)/(p-1)}\Big] \\
|\nabla\phi(x)| &\leq C_2\Big[\tau(|x|-1) \beta^{1/(p-1)} + (1 - \tau(|x|-1)) |x|^{-(N-1)/(p-1)}h(|x|)\Big] 
\end{align*}
where $\tau(r) = (1 + (r/L)^\gamma)^{-1}$ with $\gamma \geq 2$, and $h(r) = \max\{1, (\log r)^{(N-p)/(N(p-1))}\}$. The characteristic length scale $L$ quantifies the penetration depth of boundary effects, and the function $\sigma$ ensures the lower bound vanishes at the critical point $r_*$.
\end{theorem}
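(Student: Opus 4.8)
The plan is to prove the two-sided bound by interpolating the already-established regime estimates (Theorems \ref{thm:gradient-near-intro} and \ref{thm:critical-intro}) through the transition functions $\tau$ and $\sigma$, verifying that the stated expression reproduces the correct behavior in each of the three zones: the boundary layer $|x|-1 \lesssim L$, the intermediate region $|x|-1 \sim L$, and the far field $|x| \gg r_*$. First I would fix the length scale $L = \beta^{-1/(N-1)}$ and observe that $\tau(r) = (1+(r/L)^\gamma)^{-1}$ satisfies $\tau \approx 1$ for $r \ll L$ and $\tau \approx (r/L)^{-\gamma}$ for $r \gg L$; the exponent $\gamma \ge 2$ is chosen so that the decay of $\tau$ dominates the growth of the constant-in-$\beta$ boundary term, ensuring the switch-off happens cleanly. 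The key reconciliation is dimensional: at the edge of the boundary layer, $r \sim L = \beta^{-1/(N-1)}$, the near-boundary scale $\beta^{1/(p-1)}$ and the asymptotic scale $|x|^{-(N-1)/(p-1)} \sim L^{-(N-1)/(p-1)} = \beta^{1/(p-1)}$ coincide exactly, so both terms in the bracket are comparable precisely where $\tau$ transitions, and the bound is continuous across regimes.

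The core of the argument proceeds zone by zone. In the boundary layer I would invoke Theorem \ref{thm:gradient-near-intro}(a) to get $|\nabla\phi| \sim \beta^{1/(p-1)}\varphi(1)$, then absorb the factor $\varphi(1)$ into the constants $C_1, C_2$ using the boundary-value bounds \eqref{eq:phi1_bounds-intro}; since $\beta$ ranges over the compact interval $[\beta_0,\beta_1]$, $\varphi(1)$ is bounded above and below by positive constants depending only on $\beta_0,\beta_1,g$, which is exactly why the theorem restricts to a compact $\beta$-range. The Hölder correction $1 + C\,\mathrm{dist}(x,\partial B_1)^\alpha$ from part (a) is folded into the upper-bound function $h$ (or absorbed, since it stays bounded on the layer). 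In the far field $|x| \in B_{2r_*}^c$ I would apply Theorem \ref{thm:gradient-near-intro}(b) directly, matching $\hat C_1 |x|^{-(N-1)/(p-1)}$ to the lower bound and $\hat C_2 (\log|x|)^{(N-p)/(N(p-1))}|x|^{-(N-1)/(p-1)}$ to the upper bound via the definition of $h(r) = \max\{1,(\log r)^{(N-p)/(N(p-1))}\}$; here $\tau \approx 0$ so only the decay term survives.

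The role of $\sigma$ in the lower bound requires separate handling near the critical radius. Since $\varphi'(r_*) = 0$, any lower bound must vanish at $r_*$, and $\sigma(r) = (|r-r_*|/(|r-r_*|+\delta))^{1/(p-1)}$ is engineered to do exactly that while staying bounded away from $1$ only in a $\delta$-neighborhood of $r_*$. I would prove the lower bound by combining the ODE satisfied by $\varphi$ — obtained by integrating the radial form of \eqref{eq:main-intro}, namely $(r^{N-1}|\varphi'|^{p-2}\varphi')' = -\lambda_1 r^{N-1} g |\varphi|^{p-2}\varphi$ — with the monotonicity structure ($\varphi' > 0$ on $(1,r_*)$, $\varphi' < 0$ on $(r_*,\infty)$) to show $|\varphi'(r)|$ grows at least linearly in $|r-r_*|^{1/(p-1)}$ near $r_*$, which supplies the $\sigma$ factor; away from $r_*$, $\sigma$ is bounded below by a positive constant and the zone estimates take over.

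The main obstacle will be the intermediate transition region $|x| - 1 \sim L$, where neither of the two single-regime theorems applies cleanly and both bracket terms are of comparable size. The strategy here is to use the radial ODE as the bridge: from $(r^{N-1}|\varphi'|^{p-2}\varphi')' = -\lambda_1 r^{N-1} g\varphi^{p-1}$ one integrates from $1$ to $r$ using the Robin condition $|\varphi'(1)|^{p-2}\varphi'(1) = \beta\varphi(1)^{p-1}$ as the initial flux, obtaining
\begin{equation}
r^{N-1}|\varphi'(r)|^{p-2}\varphi'(r) = \beta\varphi(1)^{p-1} - \lambda_1\int_1^r s^{N-1} g(s)\varphi(s)^{p-1}\,ds,
\end{equation}
which gives a pointwise handle on $|\varphi'(r)|$ valid for \emph{all} $r$, not just in one regime. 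The integral term is controlled using the decay hypothesis $g(s) \le C_0 s^{-l}$ with $l > p$ and the pointwise bounds on $\varphi$ from Theorem \ref{thm:asymptotic-intro}; the delicate point is showing that in the crossover zone the flux term $\beta\varphi(1)^{p-1}$ and the accumulated integral are balanced so that $|\varphi'|^{p-1} \sim \beta\varphi(1)^{p-1} r^{-(N-1)}$, which upon taking $(p-1)$-th roots and using $r \sim L$ reproduces both candidate scales simultaneously. Matching the powers of $\beta$ and verifying the constants remain uniform over $[\beta_0,\beta_1]$ — rather than degenerating as $\beta_0 \to 0$ or $\beta_1 \to \infty$ — is the technical heart of the proof, and is precisely what forces the restriction to a compact parameter interval.
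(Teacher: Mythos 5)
Your toolkit overlaps substantially with the paper's actual proof --- the integrated flux identity, the nondegeneracy bound $|\varphi'(r)| \geq c_0|r-r_*|^{1/(p-1)}$ near the critical radius (this is precisely the paper's Lemma \ref{lem:nondegeneracy}, derived exactly as you propose from the strict monotonicity of $F(r) = r^{N-1}|\varphi'|^{p-2}\varphi'$), and Theorem \ref{thm:gradient-near-intro}(b) in the far field. But your zone decomposition is misaligned with where the gradient actually degenerates, and two of your zone claims fail as stated. The critical radius $r_*$ generically lies \emph{inside} your boundary-layer/crossover region: by Theorem \ref{thm:critical-intro}(b), $r_*^N - 1 \leq N\beta/(\lambda_1 g_{\min})$, so $r_* - 1 = O(\beta)$, whereas $L = \beta^{-1/(N-1)}$; both are order-one quantities, and for small $\beta$ one has $r_* - 1 \ll L$. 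Consequently (i) the claim $|\nabla\phi| \sim \beta^{1/(p-1)}\varphi(1)$ throughout the layer $|x|-1 \lesssim L$ is false, since the gradient vanishes at $r_*$ within that layer; Theorem \ref{thm:gradient-near-intro}(a) is only valid at distance less than the small $\delta$ produced in its proof, not up to scale $L$. And (ii) the crossover claim that the flux term and the accumulated integral are ``balanced so that $|\varphi'|^{p-1} \sim \beta\varphi(1)^{p-1}r^{-(N-1)}$'' is the opposite of what happens near $r_*$: by the identity \eqref{eq:fundamental_identity}, $\beta\varphi(1)^{p-1} = \lambda_1\int_1^{r_*}s^{N-1}g\,\varphi^{p-1}\,ds$, the two terms cancel at $r_*$, so their difference tends to zero there rather than being comparable to $\beta\varphi(1)^{p-1}$.

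The repair is to organize the decomposition around $r_*$ rather than around the boundary, which is what the paper does: a compact pre-critical region $[1, r_*-\delta]$, the critical neighborhood $[r_*-\delta, r_*+\delta]$, and the post-critical region split into a compact piece plus the far field. On the critical neighborhood the lower bound comes from the nondegeneracy estimate, matched against $\sigma$, which vanishes at the same rate $|r-r_*|^{1/(p-1)}$, and the upper bound from the $C^{1,\alpha}$ H\"older continuity of $\nabla\phi$ (which your proposal never invokes, yet is needed: near $r_*$ the upper-bound shape function stays bounded away from zero while $|\nabla\phi|\to 0$, so one needs a quantitative bound on $|\nabla\phi|$ there only, not a matching asymptotic). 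On the compact pieces no asymptotics are needed at all: your monotone flux $H(r) = \beta\varphi(1)^{p-1} - \lambda_1\int_1^r s^{N-1}g\,\varphi^{p-1}\,ds$ is strictly decreasing and vanishes only at $r_*$, hence is bounded away from zero off any $\delta$-neighborhood of $r_*$, and together with boundedness and positivity of the shape functions this yields the ratio bounds; the far field is Theorem \ref{thm:gradient-near-intro}(b). Note finally that the theorem's constants are $C_1(\beta), C_2(\beta)$ --- they are allowed to depend on $\beta$ --- so the uniformity over $[\beta_0,\beta_1]$ that you identify as the ``technical heart'' is not required, and the paper makes no attempt at it; the transition function $\tau$ plays no quantitative role in the argument beyond being bounded between positive constants on compact sets and tending to zero at infinity.
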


\noindent The unified gradient estimates in Theorem \ref{thm:unified-intro} represent our most significant technical innovation, providing a complete description of how Robin boundary conditions influence eigenfunction gradients throughout the entire exterior domain. The characteristic length scale $L \sim \beta^{-1/(N-1)}$ emerges naturally from the analysis and has clear physical interpretation: stronger Robin coupling (larger $\beta$) confines boundary effects to a thinner layer near $\partial B_1$, while weaker coupling allows boundary-induced variations to penetrate further into the domain.


The remainder of this paper is organized as follows. Section \ref{pre} establishes the functional framework and proves key preliminary results including Lorentz space embeddings, compactness properties, and the strong maximum principle for Robin problems on exterior domains. Section \ref{main} contains the complete proofs of Theorems \ref{thm:principal-intro}--\ref{thm:unified-intro}, with particular attention to the technical challenges arising from the exterior domain geometry and Robin boundary conditions. The proofs include detailed parameter-dependence analysis, geometric characterization of the critical point, and the development of unified gradient estimates using novel transition functions and the characteristic length scale framework.

\section{Preliminaries}\label{pre}
Let $B_1$ denote the unit closed ball in $\mathbb{R}^N$ and $B_1^c := \mathbb{R}^N \setminus B_1$ be the exterior domain. For $1 < p < \infty$, we denote by $W^{1,p}(B_1^c)$ the usual Sobolev space equipped with the norm
\[
\|u\|_{W^{1,p}} = \left(\int_{B_1^c} |u|^p + |\nabla u|^p\right)^{1/p}.
\]
For a measurable function $f$ defined on $B_1^c$ and $s > 0$, let $E_f(s) = \{x \in B_1^c : |f(x)| > s\}$. The distribution function $\alpha_f$ of $f$ is defined as $\alpha_f(s) = |E_f(s)|$ for $s > 0$, where $|\cdot|$ denotes the Lebesgue measure. The one-dimensional decreasing rearrangement $f^*$ of $f$ is defined as
\[
f^*(t) = \inf\{s > 0 : \alpha_f(s) < t\}, \quad \text{for } t > 0.
\]
For $(p,q) \in [1,\infty) \times [1,\infty]$, we define the Lorentz space $L^{p,q}(B_1^c)$ as
\[
L^{p,q}(B_1^c) := \left\{f \text{ measurable} : |f|_{(p,q)} < \infty\right\},
\]
where
$$
|f|_{(p,q)} := \begin{cases}
\left(\int_0^\infty \left[t^{1/p-1/q}f^*(t)\right]^q dt\right)^{1/q}, & 1 \leq q < \infty,\\
\sup_{t>0} t^{1/p}f^*(t), & q = \infty.
\end{cases}
$$
For $N > p$, we consider the closed subspace of $L^{N/p,\infty}(B_1^c)$ defined by
\[
F^{N/p} := \overline{C_c^\infty(B_1^c)}^{L^{N/p,\infty}(B_1^c)}.
\]

\begin{definition}
We say a function $g \in L^1_{loc}(B_1^c)$ belongs to the class $\mathcal{A}$, if $\text{supp}(g^+)$ has a positive measure and $g \in F^{N/p}$ with $N > p$.
\end{definition}
\noindent For $p \in (1,\infty)$, we consider the eigenvalue problem (\ref{eq:main-intro}). We say $\lambda \in \mathbb{R}$ is an eigenvalue of \eqref{eq:main-intro} if there exists $u \in W^{1,p}(B_1^c) \setminus \{0\}$ satisfying
\[
\int_{B_1^c} |\nabla u|^{p-2}\nabla u \cdot \nabla v + \beta\int_{\partial B_1} |u|^{p-2}uv = \lambda \int_{B_1^c} g|u|^{p-2}uv,
\]
for all $v \in W^{1,p}(B_1^c)$. In this case, $u$ is called an eigenfunction corresponding to $\lambda$.

\begin{lemma}[Generalized Hardy-Sobolev inequality]\label{lem:hardy-sobolev}
Let $N > p$. If $g \in L^{N/p,\infty}(B_1^c)$, then there exists a constant $C = C(N,p)$ such that
\begin{equation}
\label{Gen_Holder}
    \left|\int_{B_1^c} g|u|^p \, dx\right| \leq C\|g\|_{(N/p,\infty)} \int_{B_1^c} |\nabla u|^p \, dx,
\end{equation}
for all $u \in W^{1,p}(B_1^c)$. In particular, this inequality holds for $g \in F^{N/p}$.
\end{lemma}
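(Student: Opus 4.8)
The plan is to prove the generalized Hardy–Sobolev inequality
\[
\left|\int_{B_1^c} g|u|^p\,dx\right| \le C\|g\|_{(N/p,\infty)}\int_{B_1^c}|\nabla u|^p\,dx
\]
by combining the Sobolev embedding with a duality estimate in Lorentz spaces, via the following chain. First I would bound the left-hand side by O'Neil's inequality (the Hölder inequality for Lorentz spaces): writing $p^* = Np/(N-p)$ for the critical Sobolev exponent, I would view $g|u|^p$ as a product and estimate
\[
\left|\int_{B_1^c} g|u|^p\,dx\right| \le \||u|^p\|_{(N/(N-p),1)}\,\|g\|_{(N/p,\infty)},
\]
exploiting that the exponents are Hölder-conjugate, $\tfrac{p}{N}+\tfrac{N-p}{N}=1$, and that $1$ and $\infty$ are the conjugate secondary indices. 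This reduces everything to controlling the Lorentz norm $\||u|^p\|_{(N/(N-p),1)}$.

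Next I would rewrite $\||u|^p\|_{(N/(N-p),1)}$ in terms of $u$ itself. Since rearrangement interacts well with powers, $(|u|^p)^* = (u^*)^p$, a change of variables converts the Lorentz norm of $|u|^p$ into the $p$-th power of the Lorentz norm $\|u\|_{(p^*,p)}$; concretely I expect $\||u|^p\|_{(N/(N-p),1)} = \|u\|_{(p^*,p)}^p$ up to the defining integrals. The crux is then the Lorentz-space refinement of the Sobolev inequality, $\|u\|_{(p^*,p)} \le C(N,p)\|\nabla u\|_p$, which holds for the critical exponent $p^* = Np/(N-p)$ and is the sharpened form of the classical Sobolev embedding $W^{1,p}\hookrightarrow L^{p^*}$. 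Combining these two facts gives $\||u|^p\|_{(N/(N-p),1)} \le C\|\nabla u\|_p^p$, and substituting into the O'Neil estimate yields the claimed inequality with the same constant $C=C(N,p)$.

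One subtlety I would be careful about is that the Sobolev and Lorentz-Sobolev inequalities are classically stated for functions in $W^{1,p}(\mathbb{R}^N)$ or for $C_c^\infty$ functions, whereas here $u\in W^{1,p}(B_1^c)$ need not vanish on $\partial B_1$ and the domain is unbounded. To handle this I would extend $u$ to a function $\tilde{u}\in W^{1,p}(\mathbb{R}^N)$ by a bounded extension operator on the exterior domain (the complement of a smooth bounded set admits such an extension), apply the inequalities on $\mathbb{R}^N$ to $\tilde{u}$, and then restrict; alternatively, since the right-hand side involves only $\|\nabla u\|_p$ and not the full $W^{1,p}$ norm, I would check that the homogeneous estimate survives the extension so that the gradient term alone controls the left-hand side. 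The final line would note that since $F^{N/p}$ is by definition the $L^{N/p,\infty}$-closure of $C_c^\infty(B_1^c)$, every $g\in F^{N/p}$ satisfies $g\in L^{N/p,\infty}(B_1^c)$, so the inequality applies verbatim.

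The main obstacle I anticipate is the domain issue just described: ensuring that the critical Lorentz–Sobolev inequality, which is most naturally available on the whole space, transfers to $W^{1,p}(B_1^c)$ with a constant depending only on $N$ and $p$ and with only the gradient norm on the right. Since $B_1^c$ is an exterior domain with smooth boundary it does admit a uniformly bounded extension operator, but one must verify that the extension does not introduce a dependence on the full Sobolev norm; tracking this carefully — or invoking a direct scaling/rearrangement argument on $B_1^c$ that bypasses extension — is where the real work lies, whereas the O'Neil–Hölder and rearrangement steps are standard.
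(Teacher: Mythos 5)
Your overall strategy --- the Lorentz--H\"older (O'Neil) inequality with the conjugate pair $(N/p,\infty)$ and $(N/(N-p),1)$, the identity $\||u|^p\|_{(N/(N-p),1)} = \|u\|_{(p^*,p)}^p$ with $p^*=Np/(N-p)$, and the Lorentz refinement $\|u\|_{(p^*,p)}\leq C(N,p)\|\nabla u\|_p$ of the Sobolev inequality --- is the right route, and it is essentially the argument behind the paper's treatment, which consists solely of a citation to Remark 3.3 of \cite{anoop19}. The first two steps of your chain are correct as written.

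The genuine gap is precisely the point you defer at the end: the validity of $\|u\|_{(p^*,p)} \leq C(N,p)\,\|\nabla u\|_{L^p(B_1^c)}$ for \emph{every} $u \in W^{1,p}(B_1^c)$, with no zero-trace assumption and only the gradient on the right. This is the entire content of the lemma beyond standard Lorentz bookkeeping (it is exactly what makes the Moser iteration in the proof of Theorem \ref{thm:principal-intro}(b) close), and your proposal leaves it unproved: as you yourself note, a bounded extension operator $E:W^{1,p}(B_1^c)\to W^{1,p}(\mathbb{R}^N)$ only yields $\|\nabla Eu\|_p \leq C\|u\|_{W^{1,p}(B_1^c)}$, which proves the inequality with the full Sobolev norm --- a strictly weaker statement --- and saying that resolving this ``is where the real work lies'' does not resolve it. The missing idea is a \emph{gradient-controlled} extension, which $B_1^c$ admits in two clean ways. (i) Mean subtraction: let $\bar u$ be the average of $u$ over the annulus $A=B_2\setminus \overline{B_1}$, extend $(u-\bar u)|_{A}$ into $B_1$ by a bounded extension operator for $A$, and set $\tilde u = u$ on $B_1^c$, $\tilde u = E(u-\bar u)+\bar u$ on $B_1$; the Poincar\'e--Wirtinger inequality on $A$ gives $\|\nabla \tilde u\|_{L^p(\mathbb{R}^N)} \leq C(N,p)\|\nabla u\|_{L^p(B_1^c)}$, and since $\tilde u \in W^{1,p}(\mathbb{R}^N)$ the whole-space Lorentz--Sobolev inequality applies and restricts back to $B_1^c$. (ii) Inversion: set $\tilde u(x) = u(x/|x|^2)$ for $0<|x|<1$; the change of variables $y=x/|x|^2$ gives $\int_{B_1}|\nabla \tilde u|^p\,dx \leq \int_{B_1^c}|y|^{2p-2N}|\nabla u(y)|^p\,dy \leq \int_{B_1^c}|\nabla u|^p\,dy$ because $2p-2N<0$ and $|y|\geq 1$, traces match since inversion fixes $\partial B_1$, and the origin is removable since points have zero $p$-capacity for $p<N$. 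Either device turns your outline into a complete proof with $C=C(N,p)$; without one of them, the key inequality is asserted rather than proved.
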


\begin{proof}
See Remark 3.3 in \cite{anoop19}. The second statement follows since $F^{N/p} \subset L^{N/p,\infty}(B_1^c)$.
\end{proof}

\begin{lemma}\label{lem:zeroset}
Let $g \in F^{N/p}$ with $N > p$ and consider a solution $(\lambda,\phi)$ of \eqref{eq:main-intro} with $\phi \geq 0$ in $B_1^c$. Then the set $Z := \{x \in B_1^c : \phi(x) = 0\}$ has zero $W^{1,p}$-capacity. Consequently, $H^s(Z) = 0$ for all $s > N-p$, where $H^s$ denotes the $s$-dimensional Hausdorff measure.
\end{lemma}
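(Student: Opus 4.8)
The plan is to establish $\mathrm{cap}_p(Z)=0$ via a logarithmic Caccioppoli estimate, from which the Hausdorff conclusion follows by the standard comparison between $p$-capacity and Hausdorff measure. Since $B_1^c$ is open and $Z$ lies in its interior, restricting the weak formulation to test functions $v\in C_c^\infty(B_1^c)$ (which annihilate the Robin boundary term) shows that $\phi\ge 0$ solves $-\Delta_p\phi=\lambda g\phi^{p-1}$ in $\mathcal{D}'(B_1^c)$. Fix $\eta\in C_c^\infty(B_1^c)$ and, for $\epsilon>0$, insert the admissible test function $\psi=(\phi+\epsilon)^{1-p}\eta^p$. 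The leading term produces $-(p-1)\int|\nabla\log(\phi+\epsilon)|^p\eta^p$; moving it to the left, estimating the cross term $\int|\nabla\log(\phi+\epsilon)|^{p-1}\eta^{p-1}|\nabla\eta|$ by Young's inequality to absorb half of it, and bounding the weight term using $\big(\phi/(\phi+\epsilon)\big)^{p-1}\le 1$, I obtain
\[
\int|\nabla\log(\phi+\epsilon)|^p\eta^p \;\le\; C\!\int|\nabla\eta|^p \;+\; C|\lambda|\!\int |g|\,\eta^p.
\]
The crucial point is that the borderline weight enters only through its local $L^1$ norm, and $g\in F^{N/p}\subset L^{N/p,\infty}\subset L^1_{loc}$ since $N>p$; hence the right-hand side is finite and, decisively, independent of $\epsilon$.

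From this uniform bound I extract two facts. First, applying the Sobolev--Poincar\'e inequality to $v_\epsilon=\log(\phi+\epsilon)$ on a ball $B\Subset B_1^c$ controls $\big\|v_\epsilon-\tfrac{1}{|B|}\int_B v_\epsilon\big\|_{L^{p^*}(B)}$ uniformly; since $\phi\not\equiv 0$ there is $c_0>0$ with $|\{\phi\ge c_0\}\cap B|>0$, and on this set $v_\epsilon$ is bounded below, so the average $\tfrac{1}{|B|}\int_B v_\epsilon$ cannot diverge to $-\infty$ without violating the $L^{p^*}$ bound. This forces $\log\phi\in L^1(B)$, whence $\phi>0$ a.e.\ and $|Z|=0$. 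Second, for $M>0$ define the truncated logarithmic competitor $w_\epsilon=\big[(\log M-\log(\phi+\epsilon))/\log(M/\epsilon)\big]$ clamped to $[0,1]$; then $w_\epsilon\equiv 1$ on $Z$, while $|\nabla w_\epsilon|=|\nabla\log(\phi+\epsilon)|/\log(M/\epsilon)$ on its support, so the uniform estimate gives $\int_B|\nabla w_\epsilon|^p\le C/(\log(M/\epsilon))^p\to 0$. Because $0\le w_\epsilon\le 1$ and $w_\epsilon\to \mathbf{1}_Z$ pointwise with $|Z|=0$, also $\|w_\epsilon\|_{L^p(B)}\to 0$; thus $w_\epsilon\to 0$ in $W^{1,p}(B)$ while remaining $\ge 1$ on $Z\cap B$, which yields $\mathrm{cap}_p(Z\cap B)=0$. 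Covering $B_1^c$ by countably many such balls and using countable subadditivity gives $\mathrm{cap}_p(Z)=0$.

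Finally, the consequence $H^s(Z)=0$ for every $s>N-p$ is the standard potential-theoretic fact that sets of vanishing $p$-capacity have Hausdorff dimension at most $N-p$ (see e.g.\ Evans--Gariepy or Heinonen--Kilpel\"ainen--Martio), which I will simply invoke.

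I expect the main obstacle to be the borderline integrability of the weight: a generic weight in $L^{N/p,\infty}$ sits exactly at the threshold where the strong maximum principle (and hence strict positivity of $\phi$) can fail, so Harnack-type arguments are unavailable to conclude $Z=\emptyset$ outright. The logarithmic test function circumvents this precisely because the weight is multiplied by the bounded factor $\big(\phi/(\phi+\epsilon)\big)^{p-1}\le 1$, reducing the critical term to a harmless local $L^1$ quantity; verifying this absorption cleanly, together with the $\epsilon$-uniformity needed for both the measure-zero step and the capacity estimate, is where the care is required.
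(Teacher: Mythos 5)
Your proof runs on the same engine as the paper's: both insert a logarithmic test function into the weak formulation (your $(\phi+\epsilon)^{1-p}\eta^p$ is the paper's $\xi^p/(\phi+\delta)^{p-1}$ up to notation) and derive the $\epsilon$-uniform Caccioppoli bound $\int|\nabla\log(\phi+\epsilon)|^p\eta^p\le C$, taming the weight term exactly as you do via $\bigl(\phi/(\phi+\epsilon)\bigr)^{p-1}\le 1$ (the paper bounds $\int g\,\xi^p$ by its Hardy--Sobolev inequality, Lemma \ref{lem:hardy-sobolev}, rather than by $L^{N/p,\infty}\subset L^1_{loc}$; both work). The endgames differ genuinely. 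The paper argues by contradiction: if $Z$ had positive capacity, the Poincar\'e inequality for functions vanishing on a set of positive capacity (Ziemer, Cor.\ 4.5.2) applied to $\log(1+\phi/\delta)$ would give $\int_\omega|\log(1+\phi/\delta)|^p\le C$ uniformly in $\delta$, forcing $\phi\equiv 0$. You instead construct explicit capacity competitors $w_\epsilon$ (truncated normalized logarithms) that equal $1$ on $Z$ and tend to $0$ in $W^{1,p}$, reading off $\mathrm{cap}_p(Z)=0$ from the definition. Your route is more self-contained (no capacitary Poincar\'e inequality needed) but pays for it with the extra step $|Z|=0$, which you need to kill $\|w_\epsilon\|_{L^p}$; the paper's route is shorter given the cited input.

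That extra step is where your write-up has a gap, though a fixable one. You fix a ball $B\Subset B_1^c$ and assert ``since $\phi\not\equiv 0$ there is $c_0>0$ with $|\{\phi\ge c_0\}\cap B|>0$.'' But $\phi\not\equiv 0$ in $B_1^c$ only guarantees this for \emph{some} ball, whereas your capacity construction must run on \emph{every} ball of a countable cover of $B_1^c$, and hence needs $\phi>0$ a.e.\ globally. The standard repair: your Poincar\'e argument actually yields a dichotomy --- on each ball either $\phi=0$ a.e.\ or $\phi>0$ a.e.\ --- and since $B_1^c$ is connected, chaining overlapping balls propagates positivity from one ball to all of $B_1^c$. (In fairness, the paper's proof elides the same globalization when it declares that arbitrariness of $\delta$ and $\omega$ ``would force $\phi\equiv 0$.'') A second, minor point: $w_\epsilon\ge 1$ holds on $Z$ only pointwise for the chosen representative of $\phi$, not on a neighborhood of $Z$, so the admissibility of $w_\epsilon$ in the capacity infimum should be phrased via quasi-continuous representatives and the q.e.\ version of the capacity; this is standard potential theory, but it deserves a sentence.
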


\begin{proof}
The proof is adapted from \cite[Theorem 2.4]{lp06} which itself is inspired by \cite[Theorem 10.9]{GT2001}. Let $\xi \in C_c^\infty(B_1^c)$ and $\delta > 0$. We have
\begin{align*}
    \int_{B_1^c} & \left|\nabla \log\left(1 + \frac{\phi}{\delta}\right)\right|^p\xi^p = \int_{B_1^c} \left|\frac{\nabla \phi}{\phi + \delta}\right|^p\xi^p \\
    & = \int_{B_1^c} |\nabla \phi|^{p-2}\nabla \phi \cdot \frac{\nabla \phi}{(\phi + \delta)^p}\xi^p = -\frac{1}{p-1}\int_{B_1^c} |\nabla \phi|^{p-2}\nabla \phi \cdot \nabla\left(\frac{\xi^p}{(\phi + \delta)^{p-1}}\right) \\
    & = \frac{1}{p-1}\int_{B_1^c} \lambda g|\phi|^{p-2}\phi \frac{\xi^p}{(\phi + \delta)^{p-1}} + \frac{1}{p-1}\int_{\partial B_1} \beta|\phi|^{p-2}\phi \frac{\xi^p}{(\phi + \delta)^{p-1}}.
\end{align*}
Using the inequality $\frac{|\phi|^{p-2}\phi}{(\phi + \delta)^{p-1}} \leq 1$ and trace theorem, we get

$$(p-1)\int_{B_1^c} \left|\nabla \log\left(1 + \frac{\phi}{\delta}\right)\right|^p\xi^p \leq \lambda\int_{B_1^c} g(x)\xi^p + C\int_{B_1^c} |\nabla\xi^p|.$$
Since $g \in F^{N/p}$ and $\xi$ is a test function with compact support, the first term on the right-hand side is bounded  by Lemma \ref{lem:hardy-sobolev}. The second term is bounded since $\xi \in C_c^\infty(B_1^c)$. Therefore, there exists a constant $C$ independent of $\delta$ such that
\[
\int_{B_1^c} \left|\nabla \log\left(1 + \frac{\phi}{\delta}\right)\right|^p\xi^p \leq C
\]
for all $\delta > 0$. Assume by contradiction that $Z$ has positive $W^{1,p}$-capacity. Consider an arbitrary open set $\omega \subset B_1^c$ intersecting $Z$. By Poincaré inequality for sets of positive capacity (see  \cite[Corollary 4.5.2]{Ziemer1989}) we have
\[
\int_\omega \left|\log\left(1 + \frac{\phi}{\delta}\right)\right|^p \leq C.
\]
Since both $\delta$ and $\omega$ are arbitrary, and we have uniform bounds independent of $\delta$, this would force $\phi \equiv 0$ in $B_1^c$, contradicting that $\phi$ is a non-zero solution. Therefore $Z$ must have zero $W^{1,p}$-capacity. Now, we can conclude that $H^s(Z) = 0$ for all $s > N-p$.
\end{proof}

\begin{proposition}[Strong Maximum Principle]\label{prop:smp}
Let $\phi$ be a non-negative function in $W^{1,p}(B_1^c)$ satisfying
\begin{equation}
\int_{B_1^c} |\nabla \phi|^{p-2}\nabla \phi \cdot \nabla v + \beta\int_{\partial B_1} |\phi|^{p-2}\phi v = \lambda\int_{B_1^c} g|\phi|^{p-2}\phi v
\end{equation}
for all $v \in W^{1,p}(B_1^c)$, where $\beta > 0$ and $g \in F^{N/p}$ with $N > p$. Then either $\phi \equiv 0$ or $\phi > 0$ a.e. in $B_1^c$.
\end{proposition}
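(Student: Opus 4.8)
The plan is to establish the strong maximum principle by combining the capacity result of Lemma~\ref{lem:zeroset} with a Harnack-type argument adapted to the exterior domain. First I would observe that since $\phi \geq 0$ satisfies the weak formulation, the right-hand side $\lambda g |\phi|^{p-2}\phi$ can be treated as a lower-order term: setting $c(x) := \lambda g(x)\phi(x)^{p-2}$ (interpreted appropriately where $\phi$ vanishes), the function $\phi$ is a nonnegative weak supersolution of a quasilinear equation $-\Delta_p \phi + \text{(lower order)} \geq 0$ on any interior subdomain $\omega \Subset B_1^c$. Because $g \in F^{N/p} \subset L^{N/p,\infty}$, the coefficient has the borderline integrability that, via Lemma~\ref{lem:hardy-sobolev}, is exactly enough to apply the weak Harnack inequality for $p$-superharmonic-type functions (in the spirit of Trudinger's and Serrin's Harnack theory, see e.g.\ the formulation in \cite{GT2001,lp06}).

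The core of the argument splits into two steps. In the \textbf{interior}, I would invoke the weak Harnack inequality: for any ball $B_{2R}(x_0) \subset B_1^c$, one has $\inf_{B_R(x_0)} \phi \geq C\left(\fint_{B_{2R}(x_0)} \phi^s\right)^{1/s}$ for some $s>0$, with $C>0$ depending on the structural constants and the norm of the lower-order coefficient, which is controlled uniformly on compact sets by the $F^{N/p}$ assumption. This immediately gives the dichotomy on the \emph{open} domain: the set where $\phi>0$ is both open and closed in $B_1^c$, so by connectedness of $B_1^c$ (here $N>p\geq 1$ ensures $B_1^c$ is connected), either $\phi \equiv 0$ or $\phi>0$ throughout the interior. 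The alternative, cleaner route is to use Lemma~\ref{lem:zeroset} directly: if $\phi \not\equiv 0$, then the zero set $Z$ has zero $W^{1,p}$-capacity, and a capacity-zero set cannot disconnect the open set $\{\phi>0\}$ from filling $B_1^c$; combined with the weak Harnack inequality to rule out isolated interior zeros, this forces $\phi>0$ a.e.

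The \textbf{main obstacle} is handling the behavior up to the Robin boundary $\partial B_1$ and confirming that the lower-order coefficient's merely weak-Lorentz integrability suffices for the Harnack estimate rather than the standard $L^q$ with $q>N/p$ used in classical references. For the boundary, I would note that the Robin term $\beta \int_{\partial B_1}|\phi|^{p-2}\phi v$ enters with the correct sign (since $\beta>0$), so it does not obstruct the supersolution structure; testing with appropriate cutoffs shows $\phi$ cannot vanish on a boundary patch while being positive nearby. For the integrability subtlety, the key is that $F^{N/p}$ is the closure of $C_c^\infty$, so $g$ can be split as $g = g_1 + g_2$ with $g_1$ smooth compactly supported and $\|g_2\|_{(N/p,\infty)}$ arbitrarily small; the small part is absorbed using Lemma~\ref{lem:hardy-sobolev} into the gradient energy, while the good part falls under classical Harnack theory. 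I expect the statement ``$\phi>0$ a.e.'' (rather than everywhere, which would need the $C^{1,\alpha}$ regularity of Theorem~\ref{thm:principal-intro}) to be exactly what the capacity argument of Lemma~\ref{lem:zeroset} delivers cleanly, making that lemma the natural engine of the proof.
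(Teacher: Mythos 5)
Your ``alternative, cleaner route'' coincides with the paper's proof, which is essentially three lines: if $\phi \not\equiv 0$, Lemma~\ref{lem:zeroset} gives that the zero set $Z$ has zero $W^{1,p}$-capacity, and hence $\phi > 0$ a.e. The one point you should make explicit (and which renders all the surrounding machinery unnecessary) is that zero $W^{1,p}$-capacity implies zero Lebesgue measure; indeed Lemma~\ref{lem:zeroset} itself records $H^s(Z)=0$ for every $s > N-p$, so taking $s=N$ gives $|Z|=0$ outright. Since the proposition claims only almost-everywhere positivity, this finishes the argument: no connectedness discussion, no weak Harnack inequality, and no separate treatment of the Robin boundary term is required. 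Your closing observation that Lemma~\ref{lem:zeroset} is ``the natural engine of the proof'' is exactly right, and it is all the paper uses.

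The Harnack superstructure in your primary route is where the proposal overreaches, and it should not be presented as a working alternative without substantial extra input. The Serrin--Trudinger weak Harnack inequality requires the potential in $L^q_{loc}$ with $q > N/p$ strictly, and your proposed remedy --- splitting $g = g_1 + g_2$ with $g_1 \in C_c^\infty$ and $\|g_2\|_{(N/p,\infty)}$ small --- does not restore it: smallness of the weak-Lorentz norm of a potential is not sufficient for a Harnack (hence everywhere-positivity) estimate. Concretely, for any small $c>0$ the function $u(x) = |x-x_0|^{\alpha}$ with $\alpha^{p-1}\bigl(N-p+\alpha(p-1)\bigr) = c$ is a nonnegative solution of $-\Delta_p u + c|x-x_0|^{-p}u^{p-1} = 0$ vanishing at the interior point $x_0$, while $c|x-x_0|^{-p}$ has arbitrarily small $L^{N/p,\infty}$ norm; so the infimum in any ball centered at $x_0$ is zero although the average is not. (Such Hardy potentials do not belong to $F^{N/p}$, so rescuing your route would require exploiting the vanishing-concentration structure of $F^{N/p}$, not merely norm smallness.) This is precisely why the statement asserts only a.e.\ positivity and why the capacity lemma, rather than Harnack theory, is the right tool here. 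Since you do fall back on the capacity route in the end, your proposal arrives at a correct proof; just be aware that the Harnack component is inessential for the stated result and, as written, unjustified.
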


\begin{proof}
By Lemma \ref{lem:zeroset}, if $\phi \not\equiv 0$, then its set of zeros $Z := \{x \in B_1^c : \phi(x) = 0\}$ has zero $W^{1,p}$-capacity. Since sets of zero capacity cannot disconnect a domain, and $\phi$ is continuous in the complement of a set of zero capacity, we conclude that $\phi > 0$ a.e. in $B_1^c$.
\end{proof}

\noindent We define the functionals $J,G: W^{1,p}(B_1^c) \to \mathbb{R}$ as
\[
J(u) = \int_{B_1^c} |\nabla u|^p + \beta\int_{\partial B_1} |u|^p \quad \text{and} \quad G(u) = \int_{B_1^c} g|u|^p.
\]
One can easily verify that $J,G \in C^1(W^{1,p}(B_1^c); \mathbb{R})$ and for $u,v \in W^{1,p}(B_1^c)$,
\[
\langle J'(u),v \rangle = p\int_{B_1^c} |\nabla u|^{p-2}\nabla u \cdot \nabla v + p\beta\int_{\partial B_1} |u|^{p-2}uv,
\]
\[
\langle G'(u),v \rangle = p\int_{B_1^c} g|u|^{p-2}uv,
\]
where $\langle \cdot,\cdot \rangle$ denotes the duality action.
Let $N_g := \{u \in W^{1,p}(B_1^c) : G(u) = 1\}$ denote the constraint manifold.
We recall the following proposition from \cite[Proposition 4.3]{anoop19}.

\begin{proposition}\label{ref:compactness}
If $g \in \mathcal{A}$, then $G$ and $G'$ are compact on $W^{1,p}(B_1^c)$.
\end{proposition}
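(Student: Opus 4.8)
The plan is to prove both statements by a two-step approximation argument that isolates the difficulty created by the slow decay of weights in $L^{N/p,\infty}(B_1^c)$. Recall that compactness of $G$ means weak sequential continuity ($u_n \rightharpoonup u$ in $W^{1,p}(B_1^c)$ implies $G(u_n)\to G(u)$), while compactness of $G'$ means that $u_n\rightharpoonup u$ forces $G'(u_n)\to G'(u)$ strongly in the dual $\big(W^{1,p}(B_1^c)\big)^*$. Since $g\in F^{N/p}$ is, by definition, a limit in the $\|\cdot\|_{(N/p,\infty)}$-norm of functions in $C_c^\infty(B_1^c)$, I would fix $\epsilon>0$ and pick $g_\epsilon\in C_c^\infty(B_1^c)$ with $\|g-g_\epsilon\|_{(N/p,\infty)}<\epsilon$ and compact support $K_\epsilon\subset B_1^c$. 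The strategy is then uniform: the compactly supported part $g_\epsilon$ is treated by Rellich--Kondrachov compactness on the bounded set $K_\epsilon$, while the tail $g-g_\epsilon$ is controlled uniformly on bounded subsets of $W^{1,p}(B_1^c)$ by Lemma \ref{lem:hardy-sobolev} and its bilinear analogue. Since a uniform-on-bounded-sets limit of (weakly continuous, resp.\ compact) maps retains that property, this closes the argument.

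For $G$, let $u_n\rightharpoonup u$ with $\|u_n\|_{W^{1,p}}\le R$, and write $G=(G-G_\epsilon)+G_\epsilon$ where $G_\epsilon(w)=\int_{B_1^c} g_\epsilon|w|^p$. Applying Lemma \ref{lem:hardy-sobolev} to $g-g_\epsilon$ gives $|G(w)-G_\epsilon(w)|\le C\epsilon\,\|w\|_{W^{1,p}}^p\le C\epsilon R^p$ uniformly on the bounded set. For the compactly supported piece, Rellich--Kondrachov yields $u_n\to u$ strongly in $L^p(K_\epsilon)$; since $w\mapsto|w|^p$ is continuous from $L^p(K_\epsilon)$ into $L^1(K_\epsilon)$ and $g_\epsilon\in L^\infty$, this gives $G_\epsilon(u_n)\to G_\epsilon(u)$. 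A triangle inequality then produces $\limsup_n |G(u_n)-G(u)|\le 2C\epsilon R^p$, and letting $\epsilon\to0$ shows $G(u_n)\to G(u)$.

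The compactness of $G'$ is more delicate: I must show that $\sup_{\|v\|_{W^{1,p}}\le1} p\big|\int_{B_1^c} g\,(|u_n|^{p-2}u_n-|u|^{p-2}u)v\big|\to0$. Set $w_n=|u_n|^{p-2}u_n-|u|^{p-2}u$ and split the weight as before. For the tail $g-g_\epsilon$, the key is a bilinear Lorentz--Hölder estimate based on the sharp Sobolev embedding $W^{1,p}(B_1^c)\hookrightarrow L^{p^*,p}$ with $p^*=Np/(N-p)$: the multiplicativity of Lorentz quasinorms under powers gives $|u|^{p-1}\in L^{p^*/(p-1),\,p/(p-1)}$, and Hölder in Lorentz spaces then yields $w_n v\in L^{N/(N-p),1}$ with $\|w_n v\|_{(N/(N-p),1)}\le C(\|u_n\|^{p-1}+\|u\|^{p-1})\,\|v\|$. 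Since $L^{N/p,\infty}$ and $L^{N/(N-p),1}$ are Hölder-dual (the indices satisfy $p/N+(N-p)/N=1$ and $1/\infty+1/1\ge1$), the tail contribution is at most $C\epsilon(R^{p-1}+\|u\|^{p-1})\|v\|$, uniformly in $n$ and $v$. For the compactly supported piece, $u_n\to u$ in $L^p(K_\epsilon)$ gives, by continuity of the Nemytskii map, $|u_n|^{p-2}u_n\to|u|^{p-2}u$ in $L^{p'}(K_\epsilon)$ with $p'=p/(p-1)$, so $\sup_{\|v\|\le1}\big|\int_{K_\epsilon} g_\epsilon w_n v\big|\le \|g_\epsilon\|_\infty\,\|w_n\|_{L^{p'}(K_\epsilon)}\,\|v\|_{L^p(K_\epsilon)}\to0$. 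Combining the two pieces as in the case of $G$ gives $\|G'(u_n)-G'(u)\|_*\to0$.

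I expect the main obstacle to be the bilinear tail estimate for $G'$. Because the weight lives in $L^{N/p,\infty}$ (second index $q=\infty$), Hölder duality forces the complementary factor $w_n v$ into the sharp class $L^{N/(N-p),1}$ (second index $q=1$), and controlling this genuinely requires the refined Sobolev embedding into the Lorentz scale $L^{p^*,p}$ rather than the plain Lebesgue embedding into $L^{p^*}$, together with the behaviour of Lorentz quasinorms under the maps $u\mapsto|u|^{p-1}$ and under products. The remaining points needing care are the density of the $C_c^\infty(B_1^c)$ approximations built into the definition of $F^{N/p}$ and the verification that all constants are uniform on bounded sets, so that the limit of compact maps is indeed compact.
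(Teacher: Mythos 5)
Your proposal is correct, and it actually supplies more than the paper does: the paper never proves this proposition itself, it simply quotes it from \cite[Proposition 4.3]{anoop19}. Your two-step approximation argument --- splitting $g$ into a $C_c^\infty$ piece handled by Rellich--Kondrachov on a bounded subdomain, plus a tail of small $L^{N/p,\infty}$ quasinorm handled by Lemma \ref{lem:hardy-sobolev} for $G$ and by Lorentz--H\"older duality for $G'$ --- is essentially a reconstruction of the proof given in that cited reference, so relative to the actual source of the result you are following the same route. Two points would need to be made explicit in a full write-up: (i) the refined embedding $W^{1,p}(B_1^c)\hookrightarrow L^{p^*,p}(B_1^c)$ on the exterior domain is not automatic from the whole-space statement and requires a bounded extension operator $W^{1,p}(B_1^c)\to W^{1,p}(\mathbb{R}^N)$, which exists because $\partial B_1$ is smooth and compact; and (ii) the bilinear tail estimate for $G'$ is not literally Lemma \ref{lem:hardy-sobolev}, which is stated only for the quadratic form $\int_{B_1^c} g|u|^p$, so your O'Neil-type product estimate with $|u|^{p-1}\in L^{p^*/(p-1),\,p/(p-1)}$ and $|u|^{p-1}v\in L^{N/(N-p),1}$ is genuinely needed there; both points are standard and are carried out in \cite{anoop19}.
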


\noindent Next we state and prove an estimate which plays a cruicial role in the proof of the main existence result.

\begin{lemma}\label{lem:poincare}
Let $g \in \mathcal{A}$. Then there exists $m > 0$ such that
\[
\int_{B_1^c} |\nabla u|^p + \beta\int_{\partial B_1} |u|^p \geq m\int_{B_1^c} |u|^p, \quad \forall u \in N_g.
\]
\end{lemma}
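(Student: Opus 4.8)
The plan is to argue by contradiction, viewing the claim as a positive lower bound for the Rayleigh-type quotient $R(u):=J(u)/\int_{B_1^c}|u|^p$ on the manifold $N_g$, where $J(u)=\int_{B_1^c}|\nabla u|^p+\beta\int_{\partial B_1}|u|^p$. Suppose no admissible $m>0$ exists; then I can choose $u_n\in N_g$ with $J(u_n)<\tfrac1n\int_{B_1^c}|u_n|^p$. Before normalizing I would record the free lower bound coming from the constraint: since $G(u_n)=1$, the generalized Hardy--Sobolev inequality (Lemma \ref{lem:hardy-sobolev}) gives $1=\int_{B_1^c}g|u_n|^p\le C\|g\|_{(N/p,\infty)}\int_{B_1^c}|\nabla u_n|^p$, so $\int_{B_1^c}|\nabla u_n|^p\ge c_0>0$ and hence $J(u_n)\ge c_0$. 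Combined with $J(u_n)<\tfrac1n\int_{B_1^c}|u_n|^p$, this forces $\int_{B_1^c}|u_n|^p\to\infty$, so any failure of the inequality is necessarily driven by $L^p$-mass escaping to infinity rather than by vanishing energy on a bounded profile.

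Next I would normalize. Setting $v_n:=u_n/\|u_n\|_{L^p(B_1^c)}$ gives $\|v_n\|_{L^p}=1$ and $J(v_n)=J(u_n)/\|u_n\|_{L^p}^p<1/n\to0$, so both $\int_{B_1^c}|\nabla v_n|^p\to0$ and $\beta\int_{\partial B_1}|v_n|^p\to0$. In particular $\{v_n\}$ is bounded in $W^{1,p}(B_1^c)$, and after passing to a subsequence $v_n\rightharpoonup v$ weakly. Weak lower semicontinuity of the gradient norm yields $\int_{B_1^c}|\nabla v|^p\le\liminf_n\int_{B_1^c}|\nabla v_n|^p=0$, so $\nabla v=0$ a.e.; since $B_1^c$ is connected, $v$ is a.e. constant, and as $B_1^c$ has infinite measure the only constant lying in $L^p(B_1^c)$ is $0$. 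Hence $v_n\rightharpoonup0$.

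To close the loop I would use the compactness of the weight functional (Proposition \ref{ref:compactness}): from $v_n\rightharpoonup0$ in $W^{1,p}(B_1^c)$ it follows that $G(v_n)\to G(0)=0$. On the other hand, rescaling the constraint $G(u_n)=1$ gives $G(v_n)=\|u_n\|_{L^p}^{-p}$, and the contradiction I am aiming for is that this quantity must stay bounded away from $0$ --- equivalently, that $\|u_n\|_{L^p(B_1^c)}$ remains bounded along the sequence.

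This last point is exactly the step I expect to be the main obstacle. The difficulty is that on the infinite-measure domain $B_1^c$ the plain $L^p$-norm is not controlled by the gradient energy together with the boundary term, so one must rule out the spreading regime in which a profile of small gradient carries its $L^p$-mass off to infinity while $G(u_n)=1$ is maintained at vanishing cost in $J$. The whole argument therefore hinges on extracting, from $G(u_n)=1$ and the decay of $g\in F^{N/p}$, a uniform lower bound on the fraction of $\|u_n\|_{L^p}$ concentrated where $g$ is non-negligible. I would first try a near/far decomposition $u_n=u_n\chi_{B_\rho}+u_n\chi_{B_\rho^c}$, treating the inner part through the compact embedding on the bounded annulus $B_\rho\cap B_1^c$ and the outer part through the smallness of $\|g\|_{(N/p,\infty)}$ on $B_\rho^c$ in Lemma \ref{lem:hardy-sobolev}; it is precisely in transferring control of the weighted quantity $G$ back to the unweighted $L^p$-norm that the exterior-domain geometry enters and the argument becomes genuinely delicate. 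If this transfer cannot be made unconditionally, the natural remedy is to recast the coercivity in the scale-invariant homogeneous norm $\|\nabla\cdot\|_{L^p(B_1^c)}$, for which the Hardy--Sobolev lower bound $\int_{B_1^c}|\nabla u|^p\ge c_0$ already furnishes the required control on $N_g$.
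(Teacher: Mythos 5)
Your proposal follows exactly the same route as the paper's own proof --- contradiction, $L^p$-normalization, a weak limit that is forced to vanish, and an attempted contradiction via the compactness of $G$ (Proposition \ref{ref:compactness}) --- and the ``main obstacle'' you identify is precisely the point at which the paper's argument is invalid. The paper takes $\phi_n\in N_g$ with $J(\phi_n)\le \frac1n\int_{B_1^c}|\phi_n|^p$, sets $k_n=\int_{B_1^c}|\phi_n|^p$, $\psi_n=\phi_n/k_n^{1/p}$, shows (as you do) that $\psi_n\rightharpoonup\psi$ with $\psi\equiv 0$, and then concludes: ``since $G(\psi_n)=1/k_n>0$, we must have $G(\psi)>0$.'' That inference is a non sequitur: a sequence of strictly positive numbers may converge to $0$, so positivity does not pass to the limit. (The paper is led into this by an earlier misapplication of Lemma \ref{lem:hardy-sobolev}, where $\int_{B_1^c} g|\phi_n|^p$ is bounded by $\|g\|_{(N/p,\infty)}\|\phi_n\|_p^p$ rather than by the gradient term; this only yields $k_n\ge C_2>0$, which is the wrong direction.) Your own first observation shows that the leap cannot be repaired: applying Lemma \ref{lem:hardy-sobolev} correctly gives $J(u_n)\ge c_0>0$, hence $k_n\ge c_0\,n\to\infty$ along any putative counterexample sequence, so $G(\psi_n)=1/k_n\to 0=G(\psi)$ and the compactness of $G$ produces no contradiction at all. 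So you did not miss a step; you located a genuine hole, and it is the same hole that the paper papers over.

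Moreover, the hole cannot be filled, because the lemma is false as stated. Take $g\in C_c^\infty(B_1^c)$ with $g\ge 0$, $g\not\equiv 0$ (so $g\in\mathcal{A}$), fix $\phi_0\in C_c^\infty(B_1^c)$ with $\int_{B_1^c} g\phi_0^p=1$, and let $w\in C_c^\infty(\mathbb{R}^N)$ be nontrivial with support in the unit ball. Put $u_n=\phi_0+v_n$, where $v_n(x)=\epsilon_n\, w((x-x_n)/n)$, $\epsilon_n^p=n^{p/2-N}$, and the centers $x_n$ are chosen so far out that $\mathrm{supp}\,v_n$ is disjoint from $\mathrm{supp}\,g\cup\mathrm{supp}\,\phi_0$ and from a neighborhood of $\partial B_1$. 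Then $u_n\in N_g$ (the bumps never meet $g$), the boundary integral vanishes, and
\[
\int_{B_1^c}|\nabla v_n|^p=\epsilon_n^p n^{N-p}\|\nabla w\|_p^p=n^{-p/2}\|\nabla w\|_p^p\to 0,
\qquad
\int_{B_1^c}|v_n|^p=\epsilon_n^p n^{N}\|w\|_p^p=n^{p/2}\|w\|_p^p\to\infty,
\]
so $J(u_n)$ remains bounded while $\int_{B_1^c}|u_n|^p\to\infty$; hence no $m>0$ can exist. This is exactly the ``spreading regime'' you feared, and it is realizable precisely because $N>p$. Your closing remark identifies the only correct salvage: on $N_g$ one has coercivity of the homogeneous quantity $\int_{B_1^c}|\nabla u|^p\ge c_0>0$ (via Lemma \ref{lem:hardy-sobolev}), but not of the full $W^{1,p}$-norm; consequently the paper's later appeals to this lemma (boundedness of minimizing sequences in part (a) of Theorem \ref{thm:principal-intro} and of Palais--Smale sequences in Lemma \ref{lem:ps}) would have to be reworked on that weaker basis, for instance by excising far-away mass from minimizing sequences rather than invoking $L^p$-coercivity.
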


\begin{proof}
We prove this by contradiction. Suppose the inequality is false. Then for each $n \in \mathbb{N}$, there exists $\phi_n \in N_g$ such that
\[
\int_{B_1^c} |\nabla \phi_n|^p + \beta\int_{\partial B_1} |\phi_n|^p \leq \frac{1}{n}\int_{B_1^c} |\phi_n|^p.
\]
Set $k_n = \int_{B_1^c} |\phi_n|^p$. Then from the above inequality:
\[
\int_{B_1^c} |\nabla \phi_n|^p \leq \frac{k_n}{n} \quad \text{and} \quad \beta\int_{\partial B_1} |\phi_n|^p \leq \frac{k_n}{n}.
\]
Since $\phi_n \in N_g$, we have $\int_{B_1^c} g|\phi_n|^p = 1$. Using the generalized Hölder's inequality \eqref{Gen_Holder} we have:
\[
1 = \left|\int_{B_1^c} g|\phi_n|^p\right| \leq \|g\|_{(N/p,\infty)} \|\phi_n\|_p^p \leq C_1k_n.
\]
Therefore, $k_n \geq C_2 > 0$ for all $n \in \mathbb{N}$. 
Now set $\psi_n = \phi_n/k_n^{1/p}$. Then,
\begin{align*}
    \int_{B_1^c} |\psi_n|^p = 1, &\quad \int_{B_1^c} g|\psi_n|^p = \frac{1}{k_n} \leq C_3 \text{ for all } n \in \mathbb{N}, \\
   \int_{B_1^c} |\nabla \psi_n|^p &= \frac{1}{k_n}\int_{B_1^c} |\nabla \phi_n|^p \leq \frac{1}{nk_n} \to 0 \text{ as } n \to \infty, \\
   \beta\int_{\partial B_1} |\psi_n|^p &= \frac{\beta}{k_n}\int_{\partial B_1} |\phi_n|^p \leq \frac{1}{nk_n} \to 0 \text{ as } n \to \infty.
\end{align*}
Due to these properties, $\{\psi_n\}$ is bounded in $W^{1,p}(B_1^c)$. Therefore, by the reflexivity of $W^{1,p}(B_1^c)$, there exists $\psi \in W^{1,p}(B_1^c)$ such that $\psi_n \rightharpoonup \psi$ weakly in $W^{1,p}(B_1^c)$.
From the weak lower semicontinuity of norms, we have:
\[
\int_{B_1^c} |\nabla \psi|^p \leq \liminf_{n \to \infty} \int_{B_1^c} |\nabla \psi_n|^p = 0
\]
and 
\[
\beta\int_{\partial B_1} |\psi|^p \leq \liminf_{n \to \infty} \beta\int_{\partial B_1} |\psi_n|^p = 0.
\]
Therefore, $\psi$ is constant in $B_1^c$, and since $\beta > 0$, we must have $\psi = 0$ on $\partial B_1$. This implies $\psi \equiv 0$ in $B_1^c$.

On the other hand, by Proposition \ref{ref:compactness}, the functional $G$ is compact on $W^{1,p}(B_1^c)$. Hence, $\psi_n \rightharpoonup \psi$ weakly implies $G(\psi_n) \to G(\psi)$. Since $G(\psi_n) = \int_{B_1^c} g|\psi_n|^p = \frac{1}{k_n} > 0$, we must have $G(\psi) > 0$, which implies $\psi \not\equiv 0$.
This contradiction proves the lemma.
\end{proof}

\begin{remark}\label{re1}
The constraint manifold $N_g$ admits a natural differential structure. Since $G'(u)\neq 0$ for all $u\in N_g$ (indeed, $\langle G'(u),u\rangle=p>0$), the level set $N_g=G^{-1}(1)$ is a $C^1$ submanifold of $W^{1,p}(B_1^c)$ by the implicit function theorem. At each $u\in N_g$, the tangent space is
\[
T_uN_g=\{v\in W^{1,p}(B_1^c):\langle G'(u),v\rangle=0\}.
\]
The norm of the constrained differential of $J$ at $u\in N_g$ is given by
\begin{equation}
\|dJ(u)\|_{T_u^*N_g}=\inf_{\lambda\in\mathbb{R}}\|J'(u)-\lambda G'(u)\|, \label{constrained_gradient}
\end{equation}
with the minimizing $\lambda$ corresponding to the Lagrange multiplier in the Euler–Lagrange equation. In particular, $u$ is a critical point of $J$ on $N_g$ if and only if there exists $\lambda\in\mathbb{R}$ such that $J'(u)=\lambda G'(u)$, which is precisely the weak formulation of the Robin eigenvalue problem \eqref{eq:main-intro}.
\end{remark}

\begin{definition}[Palais–Smale Condition on a Manifold]\label{def:ps_manifold}
Let $\mathcal{M}$ be a $C^1$ submanifold of a Banach space $X$, and let $\Phi \in C^1(\mathcal{M},\mathbb{R})$.  
We say that $\Phi$ satisfies the Palais–Smale condition at level $c\in\mathbb{R}$ if every sequence $(u_n)\subset \mathcal{M}$ such that
\[
\Phi(u_n)\to c \quad \text{and} \quad \|d\Phi(u_n)\|_{T_{u_n}^*\mathcal{M}} \to 0
\]
has a convergent subsequence in $\mathcal{M}$.  
If this holds for all $c\in\mathbb{R}$, then $\Phi$ is said to satisfy the (PS) condition on $\mathcal{M}$.
\end{definition}

\begin{lemma}\label{lem:ps}
Let $g \in F^{N/p}$ with $N > p$. Then the functional $J$ satisfies the Palais–Smale condition on the constraint manifold $N_g$.
\end{lemma}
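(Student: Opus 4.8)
The plan is to run the standard constrained $(S_+)$ argument, treating the two exterior-domain features—the Robin boundary term and the possible loss of mass at infinity—separately. So let $(u_n)\subset N_g$ satisfy $J(u_n)\to c$ and $\|dJ(u_n)\|_{T^*_{u_n}N_g}\to 0$. First I would establish boundedness: since each $u_n\in N_g$, Lemma \ref{lem:poincare} gives $m\|u_n\|_p^p\le J(u_n)$, while trivially $\|\nabla u_n\|_p^p\le J(u_n)$, so $\|u_n\|_{W^{1,p}}^p\le(1+1/m)\,J(u_n)$ is bounded. By reflexivity I pass to a subsequence with $u_n\rightharpoonup u$ in $W^{1,p}(B_1^c)$. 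Proposition \ref{ref:compactness} then does the heavy lifting on the weight: $G(u_n)\to G(u)$ forces $G(u)=1$, so $u\in N_g$, and $G'(u_n)\to G'(u)$ strongly in $(W^{1,p})^*$.

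Next I would extract and control the Lagrange multipliers. By \eqref{constrained_gradient} the hypothesis $\|dJ(u_n)\|_{T^*N_g}\to 0$ yields scalars $\lambda_n$ with $\|J'(u_n)-\lambda_n G'(u_n)\|_{(W^{1,p})^*}=:\eta_n\to 0$. Testing this identity against $u_n$ and using the $p$-homogeneity relations $\langle J'(u_n),u_n\rangle=pJ(u_n)$ and $\langle G'(u_n),u_n\rangle=pG(u_n)=p$ gives $\lambda_n=J(u_n)+o(1)\to c$, so $(\lambda_n)$ is bounded. Now I test the same identity against $u_n-u$. Since $G'(u_n)\to G'(u)$ strongly and $u_n-u\rightharpoonup 0$, one has $\langle G'(u_n),u_n-u\rangle\to 0$, while $\langle J'(u_n)-\lambda_nG'(u_n),u_n-u\rangle\to 0$ because $\eta_n\to0$ with $(u_n-u)$ bounded; hence $\langle J'(u_n),u_n-u\rangle\to 0$, that is,
\[
\int_{B_1^c}|\nabla u_n|^{p-2}\nabla u_n\cdot\nabla(u_n-u)+\beta\int_{\partial B_1}|u_n|^{p-2}u_n(u_n-u)\to 0 .
\]
The Robin term is where trace compactness enters: since $\partial B_1$ is smooth and compact, the trace $W^{1,p}(B_1^c)\hookrightarrow L^p(\partial B_1)$ is compact, so $u_n\to u$ in $L^p(\partial B_1)$ and the boundary integral vanishes in the limit. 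Subtracting $\int_{B_1^c}|\nabla u|^{p-2}\nabla u\cdot\nabla(u_n-u)\to0$ (weak convergence of the gradients) leaves $\int_{B_1^c}\bigl(|\nabla u_n|^{p-2}\nabla u_n-|\nabla u|^{p-2}\nabla u\bigr)\cdot\nabla(u_n-u)\to0$, and the classical Simon/monotonicity inequalities for the $p$-Laplacian upgrade this to $\nabla u_n\to\nabla u$ strongly in $L^p(B_1^c)$.

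The main obstacle is the final upgrade to strong convergence in the \emph{full} $W^{1,p}(B_1^c)$ norm, i.e. recovering $u_n\to u$ in $L^p(B_1^c)$. Since $u_n\rightharpoonup u$ in the uniformly convex space $L^p(B_1^c)$, by the Radon--Riesz property it suffices to prove $\|u_n\|_p\to\|u\|_p$, and establishing this is the genuinely delicate exterior-domain step: there is no Poincaré inequality bounding $\|u\|_p$ by $\|\nabla u\|_p$ on $B_1^c$, so in principle $L^p$-mass can escape to infinity even after the gradients converge. My intended route is a tightness argument—on each bounded shell $B_R\cap B_1^c$ the Rellich--Kondrachov theorem gives $u_n\to u$ in $L^p(B_R\cap B_1^c)$, and for the tail one invokes $g\in F^{N/p}$, which forces $\|g\|_{L^{N/p,\infty}(B_R^c)}\to0$, so by \eqref{Gen_Holder} the \emph{weighted} tail $\int_{B_R^c}g|u_n|^p$ is uniformly small. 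The crux, which I expect to be the hardest point, is that this weighted tightness controls only $\int_{B_R^c}g|u_n|^p$ and not the unweighted tail $\int_{B_R^c}|u_n|^p$; ruling out a concentrating, slowly-spreading loss of $L^p$-mass at infinity is exactly where the non-compactness of the exterior domain must be defeated, and it is resolved either by combining the coercivity on $N_g$ from Lemma \ref{lem:poincare} with the weight decay, or—more cleanly—by noting that for $N>p$ the natural complete space for $J$ carries the homogeneous norm $\|\nabla\,\cdot\,\|_p$, on which the $(S_+)$ conclusion already furnishes strong convergence.
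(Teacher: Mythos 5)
Your argument up to the strong convergence of the gradients is correct and is essentially the paper's proof: boundedness of $(u_n)$ via Lemma \ref{lem:poincare}, weak convergence, the multiplier identity $\lambda_n = J(u_n)+o(1)$, and the compactness of $G'$ (Proposition \ref{ref:compactness}) to reduce matters to $\langle J'(u_n),u_n-u\rangle\to 0$. The only difference at that stage is cosmetic: you obtain $\nabla u_n\to\nabla u$ in $L^p$ from trace compactness plus the Simon monotonicity inequalities, whereas the paper extracts the norm convergences $\|\nabla u_n\|_p\to\|\nabla u\|_p$ and $\|u_n\|_{L^p(\partial B_1)}\to\|u\|_{L^p(\partial B_1)}$ by H\"older and then invokes uniform convexity; both are standard.

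The genuine gap is the step you yourself flag as the crux and then do not carry out: upgrading to $u_n\to u$ in $L^p(B_1^c)$. At that point the argument has produced exactly these facts: $u_n\rightharpoonup u$ in $W^{1,p}(B_1^c)$, $\nabla u_n\to\nabla u$ in $L^p(B_1^c)$, $u_n\to u$ in $L^p(\partial B_1)$, and $G(u_n)\to G(u)=1$. These facts do \emph{not} imply $\|u_n-u\|_{L^p(B_1^c)}\to 0$: take any $u\in N_g$ and set $u_n=u+w_n$ with $w_n=R_n^{-N/p}\chi_n$, where $\chi_n$ is a cutoff equal to $1$ on $B_{R_n}(x_n)$, supported in $B_{2R_n}(x_n)$, with $R_n\to\infty$ and $|x_n|\gg R_n$. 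Then $\|\nabla w_n\|_p^p\le CR_n^{-p}\to 0$, the traces are unchanged, and $|G(w_n)|\le C\|g\|_{(N/p,\infty)}\|\nabla w_n\|_p^p\to 0$ by \eqref{Gen_Holder}, so all four facts hold, yet $\|u_n-u\|_p=\|w_n\|_p$ stays bounded away from $0$ (rescaling by $G(u_n)^{-1/p}$ even places the sequence exactly on $N_g$). So some further input is indispensable, and neither of your two suggested routes supplies it. Route (a), ``coercivity on $N_g$ from Lemma \ref{lem:poincare} plus weight decay,'' is precisely what the paper invokes in its final sentence, but that coercivity is stated only for elements of $N_g$ (equivalently, by $p$-homogeneity, for $v$ with $G(v)>0$), while here it would have to be applied to $v_n=u_n-u$, for which $G(v_n)\to 0$ by compactness of $G$ and no sign information is available; this route therefore does not close the gap (and the paper's own one-line conclusion is subject to the same objection). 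Route (b), passing to the homogeneous norm $\|\nabla\cdot\|_p$, proves a different statement---a Palais--Smale property in the Beppo--Levi space---not the lemma as stated, which asserts subsequential convergence in the full $W^{1,p}(B_1^c)$ norm. In short, your diagnosis of where the difficulty lies is accurate, but the proposal stops exactly where the proof would have to begin.
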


\begin{proof}
Let $(u_n) \subset N_g$ be a Palais–Smale sequence, i.e.
\[
J(u_n) \to \lambda \in \mathbb{R}, \qquad \|dJ(u_n)\| \to 0 \quad \text{as } n \to \infty.
\]
By the characterization of the constrained gradient \eqref{constrained_gradient}, there exists a sequence $(\lambda_n) \subset \mathbb{R}$ such that
\[
A_{\lambda_n}(u_n) := J'(u_n) - \lambda_n G'(u_n) \to 0 \quad \text{in } (W^{1,p}(B_1^c))^*.
\]
Using Lemma~\ref{lem:poincare} and the reflexivity of $W^{1,p}(B_1^c)$, we may assume (after passing to a subsequence) that $u_n \rightharpoonup u$ weakly in $W^{1,p}(B_1^c)$. Moreover,
\[
\langle J'(u_n) - \lambda_n G'(u_n), u_n \rangle = p(J(u_n) - \lambda_n) \to 0,
\]
so that $\lambda_n \to \lambda$.
Next, we examine the difference
\[
\langle J'(u_n) - J'(u), u_n - u \rangle
= \langle A_{\lambda_n}(u_n) - A_\lambda(u), u_n - u \rangle
+ \langle \lambda_n G'(u_n) - \lambda G'(u), u_n - u \rangle.
\]
The first term tends to zero because $A_{\lambda_n}(u_n)\to 0$ and $A_\lambda(u)$ is bounded; the second term vanishes due to weak convergence of $u_n$ and compactness of $G'$. Hence,
\[
\langle J'(u_n) - J'(u), u_n - u \rangle \to 0.
\]
Expanding this quantity gives
\begin{align*}
\frac{1}{p}\langle J'(u_n) - J'(u), u_n - u \rangle
&= \|\nabla u_n\|_p^p + \beta \|u_n\|_{L^p(\partial B_1)}^p 
   + \|\nabla u\|_p^p + \beta \|u\|_{L^p(\partial B_1)}^p \\
&\quad - \int_{B_1^c} \big(|\nabla u_n|^{p-2}\nabla u_n \cdot \nabla u
   + |\nabla u|^{p-2}\nabla u \cdot \nabla u_n\big) \\
&\quad - \beta \int_{\partial B_1}\big(|u_n|^{p-2}u_n u + |u|^{p-2}u u_n\big).
\end{align*}
Applying Hölder’s inequality to the mixed terms yields
\[
\frac{1}{p}\langle J'(u_n) - J'(u), u_n - u \rangle
\geq (\|\nabla u_n\|_p^{p-1} - \|\nabla u\|_p^{p-1})(\|\nabla u_n\|_p - \|\nabla u\|_p)
\]
\[
\quad + \beta \big(\|u_n\|_{L^p(\partial B_1)}^{p-1} - \|u\|_{L^p(\partial B_1)}^{p-1}\big)\big(\|u_n\|_{L^p(\partial B_1)} - \|u\|_{L^p(\partial B_1)}\big).
\]
Since the left-hand side tends to zero, it follows that
\[
\|\nabla u_n\|_p \to \|\nabla u\|_p, \qquad 
\|u_n\|_{L^p(\partial B_1)} \to \|u\|_{L^p(\partial B_1)}.
\]
Thus, the weak convergence $u_n \rightharpoonup u$ in $W^{1,p}(B_1^c)$, together with the uniform convexity of $L^p$ spaces, implies
\[
\nabla u_n \to \nabla u \quad \text{in } L^p(B_1^c)^N.
\]
Finally, Lemma~\ref{lem:poincare} gives strong convergence $u_n \to u$ in $W^{1,p}(B_1^c)$. Hence $J$ satisfies the Palais–Smale condition on $N_g$.
\end{proof}


\section{Proofs of Main Results}\label{main}

\subsection{Proof of Theorem \ref{thm:principal-intro}}

The proof follows the variational framework established for Neumann problems in \cite{anoop19}, with necessary modifications for the Robin boundary conditions. We outline the key steps and provide detailed arguments only where the Robin setting requires new techniques.

\vspace{.4cm}

    \noindent \textbf{(a)} \textit{The existence:} The existence of $\lambda_1$ follows by standard variational arguments analogous to those in Anoop and Biswas \cite{anoop19}. The key steps are: (i) establishing that the minimizing sequence $(\phi_n)$ for $J$ on $N_g$ is bounded in $W^{1,p}(B_1^c)$ using Lemma \ref{lem:poincare}, (ii) extracting a weakly convergent subsequence by reflexivity, (iii) showing $N_g$ is weakly closed via the compactness of $G$ (Proposition \ref{ref:compactness}), and (iv) applying weak lower semicontinuity of $J$. The positivity $\lambda_1 > 0$ follows since $J(\phi) > 0$ for all $\phi \in N_g$.

\vspace{.4cm}

\noindent \textit{The Principality:} The argument that any eigenfunction corresponding to $\lambda_1$ is positive follows the same pattern as in \cite{anoop19}. The key observation is that $|\Phi|$ satisfies the same eigenvalue equation with a supersolution-type inequality. Since $g \in F^{N/p}$ ensures $g|\Phi|^{p-1} \in L^1_{loc}(B_1^c)$, Proposition \ref{prop:smp} applies to conclude $|\Phi| > 0$ a.e. in $B_1^c$.

\vspace{.4cm}

\noindent \textit{The uniqueness:} Let $\phi$ be a positive eigenfunction corresponding to $\lambda_1$. Suppose by contradiction that there exists another principal eigenvalue $\mu_1 > \lambda_1$ with corresponding positive eigenfunction $\psi$. Then by definition,
\begin{equation}
\label{phi-eqn}
\int_{B_1^c} |\nabla \phi|^{p-2}\nabla \phi \cdot \nabla v + \beta\int_{\partial B_1} |\phi|^{p-2}\phi v = \lambda_1 \int_{B_1^c} g|\phi|^{p-2}\phi v,
\end{equation}

\begin{equation}
\label{psi-eqn}
    \int_{B_1^c} |\nabla \psi|^{p-2}\nabla \psi \cdot \nabla v + \beta\int_{\partial B_1} |\psi|^{p-2}\psi v = \mu_1 \int_{B_1^c} g|\psi|^{p-2}\psi v
\end{equation}
for all $v \in W^{1,p}(B_1^c)$. Using Green's formula and taking $v = \phi$, we get
\[
\int_{B_1^c} |\nabla \phi|^p = -\int_{B_1^c} \text{div}(|\nabla \phi|^{p-2}\nabla \phi)\phi + \int_{\partial B_1} |\nabla \phi|^{p-2}\frac{\partial \phi}{\partial \nu}\phi.
\]
Using $v=\phi $ as the test function in \eqref{phi-eqn} we get
\[
\int_{B_1^c} |\nabla \phi|^p = \lambda_1 \int_{B_1^c} g|\phi|^p - \beta \int_{\partial B_1} |\phi|^p.
\]
Now consider the integral
\[
\int_{B_1^c} \nabla(\frac{\phi^p}{\psi^{p-1}}) \cdot |\nabla \psi|^{p-2}\nabla \psi.
\]
By Green's formula:
\[
\int_{B_1^c} \nabla(\frac{\phi^p}{\psi^{p-1}}) \cdot |\nabla \psi|^{p-2}\nabla \psi = -\int_{B_1^c} \text{div}(|\nabla \psi|^{p-2}\nabla \psi)\frac{\phi^p}{\psi^{p-1}} + \int_{\partial B_1} |\nabla \psi|^{p-2}\frac{\partial \psi}{\partial \nu}\frac{\phi^p}{\psi^{p-1}}.
\]
Using $v=\frac{\phi^p}{\psi^{p-1}} $ as the test function in \eqref{psi-eqn}:
\[
\int_{B_1^c} \nabla(\frac{\phi^p}{\psi^{p-1}}) \cdot |\nabla \psi|^{p-2}\nabla \psi = \mu_1 \int_{B_1^c} g|\phi|^p - \beta \int_{\partial B_1} |\phi|^p.
\]
Let $u,v \in  W^{1,p}(B_1^c)$  with $v > 0$ a.e. and let
\[
L(u,v) = |\nabla u|^p - p|u|^{p-1}|\nabla v|^{p-2}\nabla u \cdot \nabla(v)/(v)^{p-1} + (p-1)|u|^p|\nabla v|^p/(v)^p,
\]
and
\[
R(u,v) = |\nabla u|^p - \nabla(u^p/v^{p-1}) \cdot |\nabla v|^{p-2}\nabla v,
\]
By Picone's identity, for nonnegative functions $u,v \in W^{1,p}(B_1^c)$ with $v > 0$ a.e., we have
$L(u,v) = R(u,v) \geq 0$ with equality if and only if $u = cv$ for some constant $c$. Therefore:
\[
0\leq \int_{B_1^c} L(\phi,\psi) = \int_{B_1^c} R(\phi,\psi) = (\lambda_1 - \mu_1)\int_{B_1^c} g|\phi|^p.
\]
By assumption $\lambda_1 < \mu_1$, and since $\phi \in N_g$ we have $\int_{B_1^c} g|\phi|^p = 1$. This gives
\[
0 \leq \int_{B_1^c} L(\phi,\psi) = (\lambda_1 - \mu_1)\int_{B_1^c} g|\phi|^p = \lambda_1 - \mu_1 < 0,
\]
a contradiction. Therefore, $\lambda_1$ is the unique principal eigenvalue.

\vspace{.4cm}

\noindent\textbf{(b)} \textit{Boundedness:} 
Our proof employs a Moser-type iteration, adapted to our setting, following the approach of \cite{dr95}.
The weak formulation of problem \eqref{eq:main-intro} gives
\[
\int_{B_1^c} |\nabla\phi|^{p-2}\nabla\phi \cdot \nabla v \, dx + \beta\int_{\partial B_1} |\phi|^{p-2}\phi v \, dS = \lambda\int_{B_1^c} g|\phi|^{p-2}\phi v \, dx
\]
for all $v \in W^{1,p}(B_1^c)$. Taking $v = |\phi|^{(k-1)p}\phi$ where $k > 0$, we obtain
\[
((k-1)p + p)\int_{B_1^c} |\nabla\phi|^p|\phi|^{(k-1)p} \, dx + \beta\int_{\partial B_1} |\phi|^{kp} \, dS = \lambda\int_{B_1^c} g|\phi|^{kp} \, dx.
\]
For the right-hand side term, using Lemma \ref{lem:hardy-sobolev} we have,
\[
\left|\int_{B_1^c} g|\phi|^{kp} \, dx\right| \leq C\|g\|_{(N/p,\infty)} \int_{B_1^c} |\nabla(|\phi|^{kp/p})|^p \, dx.
\]
Since $\beta > 0$, the boundary term is nonnegative. Setting $w = |\phi|^{kp/p}$ and using the chain rule,
\[
|\nabla w|^p = (kp/p)^p |\phi|^{(k-1)p}|\nabla\phi|^p,
\]
we obtain
\[
\int_{B_1^c} |\nabla w|^p \, dx \leq C_1(k)\|w\|_{p^*}^p.
\]
By the Sobolev embedding for exterior domain,
\[
\|w\|_{p^*}^p \leq C_2\left(\int_{B_1^c} |\nabla w|^p \, dx + \int_{B_1^c} |w|^p \, dx\right).
\]
Choosing $k_1$ such that $(k_1+1)p = p^*$ and defining the iteration sequence $\{k_n\}$ by $(k_n + 1) = (p^*/p)^n$, we obtain
\[
\|\phi\|_{r_{n+1}} \leq C_4^{\frac{1}{k_n+1}}\|\phi\|_{r_n}
\]
where $r_n = (k_n + 1)p$. Iterating these inequalities and taking $n \to \infty$ yields
\[
\|\phi\|_{L^\infty(B_1^c)} \leq C\|\phi\|_{L^{p^*}(B_1^c)} \leq C\|\phi\|_{W^{1,p}(B_1^c)}.
\]

\vspace{.4cm}

\noindent\textbf{(c)} \textit{Higher Regularity:} 
We regard the equation $-\Delta_p \phi = \lambda g |\phi|^{p-2}\phi$ as an inhomogeneous $p$-Laplace equation of the form
$$-\Delta_p \phi = \mathcal{L}(x)$$ where $\mathcal{L}(x) := \lambda g(x)|\phi(x)|^{p-2}\phi(x)$. 
From part (b), we know that $\phi \in L^\infty(B_1^c)$, which implies
$$|\mathcal{L}(x)| \leq \lambda \|\phi\|_{L^\infty}^{p-1} |g(x)|.$$
Consequently, if $g \in L^q_{loc}(B_1^c)$ for some $q \in [1,\infty]$, then 
$\mathcal{L} \in L^q_{loc}(B_1^c)$ with
$$\|\mathcal{L}\|_{L^q(K)} \leq \lambda \|\phi\|_{L^\infty}^{p-1} \|g\|_{L^q(K)}$$
for any compact $K \subset B_1^c$.
To apply the $C^{1,\alpha}$ regularity results of DiBenedetto 
\cite{dibenedetto83}, the source term must belong to $L^q_{loc}(B_1^c)$ 
with $q > \frac{Np}{p-1}$ (see Remark on p.~829 of \cite{dibenedetto83}). 
Under our additional assumption that $g \in L^q_{loc}(B_1^c)$ for some 
$q > \frac{Np}{p-1}$, this condition is satisfied. 
Therefore, by \cite[Theorems 1 and 2]{dibenedetto83}, we conclude that 
$\phi \in C^{1,\alpha}_{loc}(B_1^c)$ for some $\alpha \in (0,1)$ depending only on $N$, $p$, and local bounds on $\phi$ and $g$.

For boundary regularity, we need to verify condition (0.6) of Lieberman \cite{lieberman88} for our Robin boundary term $\phi(x,z) = -\beta|z|^{p-2}z$. Since $\phi \in L^\infty(B_1^c)$, there exists $M > 0$ such that $|z| \leq M$ for all $z$ in consideration. For any $z,w$ with $|z|,|w| \leq M$, using mean value theorem,
\[
|\phi(x,z) - \phi(y,w)| = \beta||z|^{p-2}z - |w|^{p-2}w| \leq \beta C(p,M)|z-w|^{\min\{1,p-1\}}
\] 
where $C(p,M)$ depends only on $p$ and $M$. Since $\beta$ is constant, the $x,y$ dependence vanishes and we have
\[
|\phi(x,z) - \phi(y,w)| \leq \Phi(|x-y|^{\alpha} + |z-w|^{\alpha})
\]
where $\alpha = \min\{1,p-1\}$ and $\Phi = \beta C(p,M)$. This verifies condition (0.6) of Lieberman. Therefore, by Theorem 2 of Lieberman \cite{lieberman88}, we obtain $\phi \in C^{1,\alpha}$ in a neighborhood of $\partial B_1$. The combination of interior and boundary estimates gives $\phi \in C^{1,\alpha}_{loc}(\overline{B_1^c})$.

\vspace{.4cm}

\noindent\textbf{(d)} \textit{Simplicity:} Let $(\lambda_1,\phi)$ and $(\lambda_1,\psi)$ be two positive solutions of \eqref{eq:main-intro}. By Lemma \ref{lem:zeroset}, both sets $Z_1 = \{x \in B_1^c : \phi(x) = 0\}$ and $Z_2 = \{x \in B_1^c : \psi(x) = 0\}$ have zero $W^{1,p}$-capacity. Consequently, their union $Z = Z_1 \cup Z_2$ also has zero $W^{1,p}$-capacity. For $x \in B_1^c \setminus Z$, from Proposition 3.1 in Lucia and Prashanth \cite{lp06}, we have:
\[
\nabla \phi\psi - \phi\nabla \psi = 0 \quad \text{H}^N\text{-a.e. in } B_1^c \setminus Z.
\]
It shows that $\nabla(\log \phi - \log \psi) = 0$ in $B_1^c \setminus Z$, which implies $\phi = c\psi$ for some constant $c$ in any connected component of $B_1^c \setminus Z$.
We need to establish that $B_1^c \setminus Z$ is indeed connected. Since $Z$ has zero $W^{1,p}$-capacity, this follows from Lemma 2.46 in Heinonen et al. \cite{hkm93}, which indicates that removing a set of zero capacity cannot disconnect a domain. Alternatively, as noted by Kawohl (acknowledged in Lucia and Prashanth \cite{lp06}), if $B_1^c \setminus Z$ had multiple connected components, we could define $\phi$ to be a positive multiple of $\psi$ in one component and a negative multiple in another. This would give a principal eigenfunction that changes sign, contradicting Proposition \ref{prop:smp}. Therefore, $B_1^c \setminus Z$ must be connected, and consequently $\phi = c\psi$ throughout $B_1^c \setminus Z$ for some constant $c$.

Since $\partial B_1$ is $C^2$, we have the continuous embedding $W^{1,p}(B_1^c) \hookrightarrow W^{1-1/p,p}(\partial B_1)$. Since $Z$ has zero $W^{1,p}$-capacity, its intersection with $\partial B_1$ has zero $(N-1)$-dimensional Hausdorff measure. Hence, almost every point on $\partial B_1$ is not in $Z$. For any $x_0 \in \partial B_1 \setminus Z$, the density of $B_1^c \setminus Z$ near $x_0$ ensures we can find a sequence $\{x_n\} \subset B_1^c \setminus Z$ with $x_n \to x_0$ such that:
\[
\phi(x_n) = c\psi(x_n) \quad \text{and} \quad \nabla\phi(x_n) = c\nabla\psi(x_n).
\]
Taking limits and using the $C^{1,\alpha}$ regularity established in part (c):
\[
\phi(x_0) = c\psi(x_0) \quad \text{and} \quad \nabla\phi(x_0) = c\nabla\psi(x_0).
\]
On $\partial B_1$, the Robin boundary conditions give:
\[
|\nabla\phi|^{p-2}\frac{\partial\phi}{\partial\nu} + \beta|\phi|^{p-2}\phi = 0
\quad \text{and} \quad
|\nabla\psi|^{p-2}\frac{\partial\psi}{\partial\nu} + \beta|\psi|^{p-2}\psi = 0.
\]
Substituting $\phi = c\psi$ and $\nabla\phi = c\nabla\psi$ in the first equation:
\[
|c\nabla\psi|^{p-2}c\frac{\partial\psi}{\partial\nu} + \beta|c\psi|^{p-2}c\psi = 0,
\]
which simplifies to
\[
|c|^{p-2}c\left(|\nabla\psi|^{p-2}\frac{\partial\psi}{\partial\nu} + \beta|\psi|^{p-2}\psi\right) = 0.
\]
Since both $\phi$ and $\psi$ are normalized principal eigenfunctions satisfying $\int_{B_1^c} g|\phi|^p = \int_{B_1^c} g|\psi|^p = 1$, and we have established $\phi = c\psi$ almost everywhere in $B_1^c$, we obtain:
$$1 = \int_{B_1^c} g|\phi|^p = |c|^p \int_{B_1^c} g|\psi|^p = |c|^p.$$
Therefore $|c| = 1$. Since both $\phi$ and $\psi$ are positive almost everywhere in $B_1^c$ due to strong maximum principle (Proposition \ref{prop:smp}), we conclude $c = 1$. Therefore, $\phi = \psi$ in $B_1^c \cup \partial B_1$ except for a set of zero $W^{1,p}$-capacity, proving that $\lambda_1$ is simple.

\vspace{.4cm}

\noindent\textbf{(e)} \textit{Isolatedness:} The isolation argument follows the same Palais-Smale approach as in Anoop and Biswas \cite{anoop19}. The key steps are: (i) assuming $\lambda_1$ is not isolated gives a sequence of eigenvalues $\lambda_n \to \lambda_1$, (ii) the corresponding eigenfunctions $\phi_n$ satisfy the Palais-Smale condition, (iii) extracting a convergent subsequence that must converge to $\pm|\Phi|$ by simplicity, and (iv) deriving a contradiction using the positivity of $|\Phi|$ on sets of positive measure. 

\vspace{.4cm}

Having established the fundamental properties of the principal eigenfunction - existence, uniqueness, simplicity, regularity, and isolation of the principal eigenvalue - we will now investigate  the change in the principal eigenvalue $\lambda_1$ and the corresponding eigenfunctions as the Robin parameter $\beta$ is varied. While our primary focus in this paper is on the fixed-$\beta$ analysis and the subsequent asymptotic and gradient estimates, the variational framework naturally yields parameter-dependence results that provide valuable insight into the transition between classical Neumann ($\beta \to 0$) and Dirichlet ($\beta \to \infty$) boundary conditions.The following remark, summarizes these supplementary results.

\begin{remark}[Dependence on the Robin Parameter]\label{rem:beta_dependence}
Following the approach of Dr\'abek and Rasouli in \cite{dr10} where they studied the Robin eigenvalue problem on domains of finite measure, we obtain the following properties:

\noindent\textbf{(i) Basic Properties:} The function $\beta \mapsto \lambda_1(\beta)$ is concave, strictly increasing, and continuously differentiable on $(0,\infty)$. Moreover, the derivative admits the explicit formula
\begin{equation}\label{eq:derivative_formula}
\frac{d\lambda_1}{d\beta} = \int_{\partial B_1} \phi_1(\beta)^p \, dH^{N-1},
\end{equation}
which follows from the normalization $\int_{B_1^c} g|\phi_1(\beta)|^p = 1$ and variational characterization of $\lambda_1(\beta)$. The strict positivity of the right-hand side (ensured by V\'azquez's maximum principle \cite{vaz84} and $\beta > 0$) guarantees monotonicity.

\noindent\textbf{(ii) Eigenfunction Continuity:} The eigenfunction $\beta \mapsto \phi_1(\beta)$ is continuous as a map from $(0,\infty)$ into $W^{1,p}(B_1^c)$. This follows from the compactness of the weight functional $G(u) = \int_{B_1^c} g|u|^p$ (Proposition \ref{ref:compactness}) combined with weak lower semicontinuity arguments and the uniform convexity of $W^{1,p}(B_1^c)$.

\noindent\textbf{(iii) Dirichlet Limit ($\beta \to \infty$):} As $\beta \to \infty$, we have
\[
\lambda_1(\beta) \to \lambda_1^D \quad \text{and} \quad \phi_1(\beta) \to \phi_1^D \quad \text{strongly in } W^{1,p}(B_1^c),
\]
where $\lambda_1^D$ and $\phi_1^D$ denote the principal eigenvalue and eigenfunction for the Dirichlet problem $u|_{\partial B_1} = 0$. From the variational characterization of $\lambda_1(\beta)$ and $ \lambda_1^D$, it is easy to see that $\lambda_1(\beta) \leq \lambda_1^D$. The convergence follows from: (a) the monotonicity and boundedness  for all $\beta > 0$, (b) extracting a weak limit of the bounded sequence $\{\phi_1(\beta)\}$ as $\beta \to \infty$, and (c) verifying that the limit satisfies the Dirichlet boundary condition. Since $\frac{d\lambda_1}{d\beta}<0$, it also follows that $\lambda_1(\beta) < \lambda_1^D$ for all  $\beta$.

\noindent\textbf{(iv) Neumann Limit ($\beta \to 0^+$):} The limiting behavior as $\beta \to 0^+$ exhibits fundamentally different characteristics in exterior domains compared to bounded domains. In bounded domains, Neumann eigenfunctions are constants (with eigenvalue zero), but this is \emph{not} true for exterior domains. By Theorem 1.1 of Anoop and Biswas \cite{anoop19}, the Neumann principal eigenvalue on $B_1^c$ with weight $g \in F^{N/p}$ is strictly positive: $\lambda_1^N > 0$. Consequently, the Neumann eigenfunction $\phi_1^N$ is non-trivial with $\int_{B_1^c} |\nabla\phi_1^N|^p = \lambda_1^N > 0$. 
Using this result, we establish the Neumann limit:
\[
\lambda_1(\beta) \to \lambda_1^N \quad \text{and} \quad \phi_1(\beta) \to \phi_1^N \quad \text{strongly in } W^{1,p}(B_1^c) \quad \text{as } \beta \to 0^+.
\]
Since $\lambda_1(\beta) = \int_{B_1^c} |\nabla\phi_1(\beta)|^p + \beta\int_{\partial B_1} |\phi_1(\beta)|^p$ and the boundary term is nonnegative, we have $\lambda_1(\beta) \geq \int_{B_1^c} |\nabla\phi_1(\beta)|^p \geq \lambda_1^N$ by the variational characterization of the Neumann problem, yielding $\liminf_{\beta\to 0^+} \lambda_1(\beta) \geq \lambda_1^N$. 
For the upper bound, we use the Neumann eigenfunction $\phi_1^N \in N_g$ as a test function. By the trace theorem, $\int_{\partial B_1} |\phi_1^N|^p =: C < \infty$. The variational characterization gives
$$\lambda_1(\beta) \leq \int_{B_1^c} |\nabla\phi_1^N|^p + \beta\int_{\partial B_1} |\phi_1^N|^p = \lambda_1^N + \beta C.$$
Taking the limit as $\beta \to 0^+$ yields $\limsup_{\beta\to 0^+} \lambda_1(\beta) \leq \lambda_1^N$. The squeeze $\lambda_1^N \leq \liminf_{\beta\to 0^+} \lambda_1(\beta) \leq \limsup_{\beta\to 0^+} \lambda_1(\beta) \leq \lambda_1^N$ forces $\lambda_1(\beta) \to \lambda_1^N$.
The compactness of $G$ (Proposition \ref{ref:compactness}) combined with weak lower semicontinuity identifies the weak limit of $\{\phi_1(\beta)\}$ as $\phi_1^N$, and the uniform convexity of $L^p$ spaces upgrades weak convergence to strong convergence.
\end{remark}

Next, we turn our attention to obtaining estimates for the eigenfunctions near the boundary and for large $|x|$.

\subsection{Proof of Theorem \ref{thm:asymptotic-intro}}

The proof of Theorem \ref{thm:asymptotic-intro} combines several complementary techniques. For the general decay estimates in part (a), we follow the comparison principle approach of Chhetri and Dr\'{a}bek \cite{cd14}, utilizing Serrin's local estimates and critical exponent analysis to establish universal far-field decay rates independent of the Robin parameter. The radial symmetry in part (b) follows from rotational invariance of the variational characterization. For the boundary estimates in part (c), we exploit the Robin boundary condition to derive the explicit relationship $\phi'(1) = \beta^{1/(p-1)}\phi(1)$, then apply the $C^{1,\alpha}$ regularity from Theorem \ref{thm:principal-intro} to extend this near-boundary behavior, revealing the characteristic $\beta$-dependence of the solutions near the boundary.

\vspace{.4cm}

\noindent\textbf{(a)} \textit{General Decay at Infinity:} 
Since $g \in F^{N/p} \subset L^{N/p,\infty}(B_1^c)$ and $\phi \in L^\infty(B_1^c)$ by Theorem \ref{thm:principal-intro}, we can apply Serrin's local estimates \cite{serrin64}. For any $x$ with $|x|$ sufficiently large:
\[
\|\phi\|_{L^\infty(B_1(x))} \leq C\left(\|\phi\|_{L^{p^*}(B_2(x))} + \|\lambda_1 g\phi^{p-1}\|_{L^{\gamma_1}(B_2(x))}\right),
\]
where $\gamma_1 > N/p$. The weight decay condition $g(x) \leq C_0|x|^{-l}$ with $l > p$ ensures that as $|x| \to \infty$, the right-hand side vanishes, proving $\phi(x) \to 0$ uniformly.
The eigenfunction satisfies $-\Delta_p \phi = \lambda_1 g\phi^{p-1} \geq \lambda_1 c|x|^{-l}\phi^{p-1}$ for some $c > 0$ when $|x|$ is sufficiently large. By using \cite[Proposition 2.6]{veron01}, we get:
\[
\phi(x) \geq \frac{C_1}{|x|^{(N-p)/(p-1)}} \quad \text{for } |x| \geq r_0.
\]
Setting $f(x,u) = \lambda_1 g(x)u^{p-1}$, we have $f(x,u) \leq \lambda_1 C_0|x|^{-l}u^{p-1}$. The critical exponent analysis from \cite[Theorem 4]{avila08} applies: since $l > p$, we have $q = p-1 > q_l = (N-l)(p-1)/(N-p)$. This yields:
\[
\phi(x) \leq \frac{C_2}{|x|^{(N-p)/(p-1)}} \quad \text{for } |x| \geq r_0.
\]

\vspace{0.3cm}

\noindent\textbf{(b)} \textit{Radial Symmetry:} The variational characterization of $\lambda_1$ is rotationally invariant under our assumptions. Since $g(x) = g(|x|)$ and $B_1^c$ is rotationally symmetric, both functionals
$$J(u) = \int_{B_1^c} |\nabla u|^p + \beta\int_{\partial B_1} |u|^p \quad \text{and} \quad G(u) = \int_{B_1^c} g|u|^p$$
are invariant under rotations about the origin. By the simplicity of $\lambda_1$ (Theorem \ref{thm:principal-intro}), if $\phi$ is a principal eigenfunction, then so is $\phi \circ R^{-1}$ for any rotation $R$. Uniqueness implies $\phi \circ R^{-1} = \phi$, which forces $\phi(x)$ to be radial.

\vspace{0.3cm}

\noindent\textbf{(c)} \textit{Boundary Estimates:} 
From part (b), the eigenfunction $\phi$ is radially symmetric, i.e., $\phi(x) = \varphi(|x|)$ for all $x \in B_1^c$ for some $\varphi:[1,\infty)\to\mathbb{R}$. Under radial symmetry, the gradient is
$$\nabla \phi(x) = \varphi'(|x|) \frac{x}{|x|}.$$
On the boundary $\partial B_1$ where $|x| = 1$, this becomes
$\nabla \phi(x) = \varphi'(1)  x.$
Since the outward unit normal at $x \in \partial B_1$ is $\nu(x) = -x$, the normal derivative is
$$\frac{\partial \phi}{\partial \nu} = \nabla \phi \cdot \nu = \varphi'(1)~ x \cdot (-x) = -\varphi'(1).$$
From the Robin boundary condition and using \cite[Theorem 5]{vaz84} we conclude that $\frac{\partial \phi}{ \partial \nu}<0 $. So, we have $\varphi'(1) > 0$. The gradient magnitude is
$$|\nabla \phi(x)| = |\varphi'(1) \cdot x| = |\varphi'(1)| \quad \text{for } x \in \partial B_1.$$
Substituting into the Robin boundary condition $|\nabla \phi|^{p-2}\frac{\partial \phi}{\partial \nu} + \beta|\phi|^{p-2}\phi = 0$:
\begin{align*}
    |\varphi'(1)|^{p-2}(-\varphi'(1)) + \beta|\varphi(1)|^{p-2}\varphi(1) = 0. \label{rr_general}
\end{align*}
Since we have $\varphi'(1) >0$, this simplifies to
$$\varphi'(1) = \beta^{1/(p-1)} \varphi(1).$$
By the $C^{1,\alpha}$ regularity from Theorem \ref{thm:principal-intro}, for $r = 1 + t$ with small $t > 0$:
$$\varphi(1+t) = \varphi(1) + \varphi'(1)t + O(t^{1+\alpha}) = \varphi(1) + \beta^{1/(p-1)} \varphi(1) \cdot t + O(t^{1+\alpha})$$
$$= \varphi(1)(1 + \beta^{1/(p-1)} t) + O(t^{1+\alpha}).$$
The $C^{1,\alpha}$ regularity implies there exist constants $L > 0$ and $\delta > 0$ such that for all $0 < t < \delta$:
$$|\varphi(1+t) - \varphi(1) - \varphi'(1)t| \leq L t^{1+\alpha}.$$
Substituting our values:
$$|\varphi(1+t) - \varphi(1)(1 + \beta^{1/(p-1)} t)| \leq L t^{1+\alpha}.$$
Rearranging, we get
$$\varphi(1)(1 + \beta^{1/(p-1)} t) - L t^{1+\alpha} \leq \varphi(1+t) \leq \varphi(1)(1 + \beta^{1/(p-1)} t) + L t^{1+\alpha}.$$
Since $\phi(x) = \varphi(|x|) = \varphi(1+t)$ where $t = |x| - 1$, setting $K = L$ establishes \eqref{eq:boundary_bounds}.

\begin{remark}\label{rem:limiting_behavior}
The decay estimates in Theorem \ref{thm:asymptotic-intro} exhibit the expected limiting behavior as the Robin parameter $\beta$ varies, providing a unifying framework that interpolates between classical boundary conditions:

\noindent\textbf{(i) Dirichlet Limit ($\beta \to \infty$):} The rescaled function $\Psi(x) := \frac{\phi(x)-\varphi(1)}{\varphi(1)\beta^{1/(p-1)}}$ reveals the connection to Dirichlet problems. From the boundary decay estimate \eqref{eq:boundary_bounds}, we have
$$\Psi(x) = \frac{\phi(x) - \varphi(1)}{\varphi(1)\beta^{1/(p-1)}} \approx \frac{\varphi(1)\beta^{1/(p-1)} t - K t^{1+\alpha}}{\varphi(1)\beta^{1/(p-1)}} = t - \frac{K t^{1+\alpha}}{\beta^{1/(p-1)}}$$
where $t = |x| - 1 = \mathrm{dist}(x,\partial B_1)$. As $\beta \to \infty$, we obtain $\psi(x) \to t$ with $\psi = 0$ on $\partial B_1$, exhibiting the characteristic linear behavior $\psi(x) \approx \mathrm{dist}(x,\partial B_1)$ of Dirichlet eigenfunctions. This is consistent with the results of Chhetri and Dr\'{a}bek \cite{cd14} for the Dirichlet case.

\noindent\textbf{(ii) Neumann Limit ($\beta \to 0$):} As $\beta \to 0$, the Robin boundary condition becomes $\partial\phi/\partial\nu = 0$. The boundary decay estimate reduces to $\phi(x) \approx \varphi(1) + O(t^{1+\alpha})$, showing that $\phi$ is approximately constant near the boundary, which is precisely the expected behavior for Neumann conditions where the normal derivative vanishes.
\end{remark}

\subsection{Proof of Theorem \ref{thm:gradient-near-intro}}
For the near-boundary estimates in part (a), we exploit the direct relationship $|\nabla\phi| = \beta^{1/(p-1)}\phi(1)$ on $\partial B_1$ derived from the Robin boundary condition, then extend these bounds using the $C^{1,\alpha}$ regularity from Theorem \ref{thm:principal-intro}. The far-field estimates in part (b) establish the existence and uniqueness of the critical point $r_* > 1$ where $\phi'(r_*) = 0$ by analyzing the monotonicity of $F(r) = r^{N-1}|\phi'(r)|^{p-2}\phi'(r)$. 


\vspace{.4cm}

\noindent\textbf{(a)} \textit{Gradient estimate near boundary:} 
Following the same analysis as in the proof of Theorem \ref{thm:asymptotic-intro} part (c), we obtain from the Robin boundary condition and radial symmetry:
 $$|\nabla \phi(x)| = \beta^{1/(p-1)}\varphi(x) \quad \text{for all } x \in \partial B_1$$
Due to radial symmetry, $\phi(x) = \varphi(1)$ is constant on $\partial B_1$, so:
$$|\nabla \phi(x)| = \beta^{1/(p-1)}\varphi(1) \quad \text{for all } x \in \partial B_1$$
The $C^{1,\alpha}$ regularity from Theorem \ref{thm:principal-intro} extends these boundary estimates to nearby interior points. Specifically, there exists $\Lambda > 0$ such that $$|\nabla \phi(x) - \nabla \phi(y)| \leq \Lambda |x - y|^{\alpha} ~~ \forall x, y \in B_1^c.$$ For $x \in B_1^c$ near $\partial B_1$, let $x_0 \in \partial B_1$ be the closest boundary point, so that $|x - x_0| = \mathrm{dist}(x,\partial B_1)$. 
For the lower bound, the triangle inequality and Hölder continuity give
$$|\nabla \phi(x)| \geq |\nabla \phi(x_0)| - \Lambda\cdot\mathrm{dist}(x,\partial B_1)^{\alpha} \geq \beta^{1/(p-1)}\varphi(1) - \Lambda\cdot\mathrm{dist}(x,\partial B_1)^{\alpha}.$$
Choosing $\delta = \left(\frac{\beta^{1/(p-1)}\varphi(1)}{2\Lambda}\right)^{1/\alpha}$ ensures that for all $x$ with $\mathrm{dist}(x,\partial B_1) < \delta$, we have $\Lambda\cdot\mathrm{dist}(x,\partial B_1)^{\alpha} < \frac{\beta^{1/(p-1)}\varphi(1)}{2}$, yielding 
$$|\nabla \phi(x)| \geq \frac{1}{2}\beta^{1/(p-1)}\varphi(1).$$
For the upper bound, the reverse triangle inequality gives
$$|\nabla \phi(x)| \leq |\nabla \phi(x_0)| + \Lambda\cdot\mathrm{dist}(x,\partial B_1)^{\alpha} \leq M_0\beta^{1/(p-1)}\varphi(1) + \Lambda\cdot\mathrm{dist}(x,\partial B_1)^{\alpha}.$$
Factoring yields $|\nabla \phi(x)| \leq M_0\beta^{1/(p-1)}\varphi(1)\left(1 + \frac{\Lambda}{M_0\beta^{1/(p-1)}\varphi(1)}\mathrm{dist}(x,\partial B_1)^{\alpha}\right)$. Setting $m = \frac{1}{2}$, $M = M_0$, and $C = \frac{\Lambda}{M_0\beta^{1/(p-1)}\varphi(1)}$ completes the proof.

\vspace{.4cm}

\noindent\textbf{(b)} \textit{Gradient estimate near infinity:} 
Since the problem has radial symmetry, we write $\phi(x) = \varphi(r)$ where $r = |x|$. 
By the regularity arguments given in \cite[p.110]{dk12}, the eigenfunction $\varphi$ satisfies the following ODE in $(1,\infty)$: 
\begin{equation}
    (r^{N-1}|\varphi'(r)|^{p-2}\varphi'(r))' + \lambda_1 r^{N-1}g(r)|\varphi(r)|^{p-2}\varphi(r) = 0. \label{ge1}
\end{equation}
We first establish the monotonicity structure of $\phi$. From Theorem \ref{thm:asymptotic-intro} part (b), we have $\varphi'(1) = \beta^{1/(p-1)}\varphi(1) > 0$. Since $\varphi(r) \to 0$ as $r \to \infty$ while $\varphi(r) > 0$ for all $r > 1$ by Proposition \ref{prop:smp}, the derivative $\phi'(r)$ must become negative at some point. Otherwise, $\phi$ would be non-decreasing and could not decay to zero.
To show uniqueness of the critical point, we define $F(r) := r^{N-1}|\varphi'(r)|^{p-2}\varphi'(r)$. Since $\lambda_1 > 0$, $g(r) > 0$, and $\varphi(r) > 0$, from \eqref{ge1} we have:
$$F'(r) = -\lambda_1 r^{N-1}g(r)|\varphi(r)|^{p-2}\varphi(r) < 0 \quad \text{for all } r > 1.$$
If there exist two distinct critical points $a, b > 1$ with $a < b$ such that $\varphi'(a) = 0 = \varphi'(b)$, then $F(a) = F(b) = 0$. By Rolle's theorem, there exists $c \in (a,b)$ such that $F'(c) = 0$, contradicting $F'(r) < 0$ for all $r > 1$. Therefore, there exists a unique $r_* > 1$ such that $\varphi'(r_*) = 0$, with $\phi'(r) > 0$ for $1 < r < r_*$ and $\varphi'(r) < 0$ for $r > r_*$.

Since $\varphi > 0$ by Proposition \ref{prop:smp} and $\varphi'(r) < 0$ for $r > r_*$, we have $|\varphi'(r)|^{p-2}\varphi'(r) = -|\varphi'(r)|^{p-1}$ for $r > r_*$. Rearranging \eqref{ge1} and integrating from $r_*$ to $r$ (with $r > r_*$) yields:
$$r^{N-1}|\varphi'(r)|^{p-1} = |\varphi'(r_*)|^{p-1} + \lambda_1\int_{r_*}^r s^{N-1}g(s)\varphi^{p-1}(s)ds.$$
Since $\phi'(r_*) = 0$, this simplifies to:
\begin{equation}
    r^{N-1}|\varphi'(r)|^{p-1} = \lambda_1\int_{r_*}^r s^{N-1}g(s)\varphi^{p-1}(s)ds.\label{ge2}
\end{equation}
From Theorem \ref{thm:asymptotic-intro}, for sufficiently large $s \geq r_0$, the eigenfunction satisfies the asymptotic decay estimate 
$$\varphi(s) \geq \frac{C_{\text{low}}}{s^{(N-p)/(p-1)}} $$ 
for some positive constant $C_{\text{low}}$. Since $g(x) > 0$ for all $x \in B_1^c$ and $g$ is continuous, on the compact interval $[r_*, 2r_*]$ we have $g_{\min} := \min_{s \in [r_*, 2r_*]} g(s) > 0$. For $r \geq 2r_*$, we estimate the integral term from below:
\begin{align*}
\int_{r_*}^r s^{N-1}g(s)\varphi^{p-1}(s)ds &\geq \int_{r_*}^{2r_*} s^{N-1}g(s)\varphi^{p-1}(s)ds \\
&\geq \int_{r_*}^{2r_*} s^{N-1} \cdot g_{\min} \cdot \left(\frac{C_{\text{low}}}{s^{(N-p)/(p-1)}}\right)^{p-1} ds \\
&= g_{\min} C_{\text{low}}^{p-1} \int_{r_*}^{2r_*} s^{p-1} ds \\
&= g_{\min} C_{\text{low}}^{p-1} \frac{(2r_*)^p - r_*^p}{p} =: I_0 > 0.
\end{align*}
Therefore, for $r \geq 2r_*$, we have $r^{N-1}|\varphi'(r)|^{p-1} \geq \lambda_1 I_0$, which gives us:
$$|\varphi'(r)| \geq (\lambda_1 I_0)^{1/(p-1)} r^{-(N-1)/(p-1)}.$$
Setting $\hat{C}_1 := (\lambda_1 I_0)^{1/(p-1)}$, we obtain the desired lower bound:
$$|\nabla \phi(x)| \geq \hat{C}_1|x|^{-(N-1)/(p-1)} \quad \text{for all } x \in B_{2r_*}^c.$$
Note that $\hat{C}_1 > 0$ depends on the eigenvalue $\lambda_1$ and the weight function $g$, but is independent of the Robin parameter $\beta$.

For the upper bound, we start with (\ref{ge2}):
$$r^{N-1}|\varphi'(r)|^{p-1} = \lambda_1\int_{r_*}^r s^{N-1}g(s)\varphi^{p-1}(s)ds.$$
For the integral term, we apply the asymptotic decay estimate $\varphi(s)  \leq \frac{C_2}{s^{(N-p)/(p-1)}}$ from Theorem \ref{thm:asymptotic-intro} to obtain:
$$\int_{r_*}^r s^{N-1}g(s)\varphi^{p-1}(s)ds \leq C_2^{p-1} \int_{r_*}^r s^{p-1}g(s) ds.$$
The critical step involves applying Hölder's inequality with exponents $\frac{N}{p}$ and $\frac{N}{N-p}$, utilizing the $L^{N/p}$ structure of the weight function. As established by Chhetri-Dräbek, this yields:
$$\int_{r_*}^r s^{p-1}g(s) ds \leq \|g\|_{L^{N/p}(B_1^c)}^{p/N} C_{\text{geom}}(\log r)^{(N-p)/N},$$
where $C_{\text{geom}}$ is a geometric constant depending on $N$, $p$, and the domain parameters. Therefore:
$$\int_{r_*}^r s^{N-1}g(s)\varphi^{p-1}(s)ds \leq C^{*}(\log r)^{(N-p)/N}$$
where $C^{*} := C_2^{p-1} \|g\|_{L^{N/p}(B_1^c)}^{p/N} C_{\text{geom}}$.
Taking the $(p-1)$-th root we obtain
$$|\varphi'(r)| \leq \left[\lambda_1 C^{*}(\log r)^{(N-p)/N}\right]^{1/(p-1)} r^{-(N-1)/(p-1)}$$
$$\implies |\varphi'(r)| \leq \hat{C}_2 (\log r)^{(N-p)/(N(p-1))} r^{-(N-1)/(p-1)}$$
where the explicit constant is given by:
$$\hat{C}_2 = \left[\lambda_1 C_2^{p-1} \|g\|_{L^{N/p}(B_1^c)}^{p/N} C_{\text{geom}}\right]^{1/(p-1)}.$$

\vspace{.4cm}

Having established the existence, uniqueness, and gradient bounds near $r_*$, we now quantify the non-degeneracy of this critical point through a local growth estimate. This technical result will be essential for the unified gradient framework developed in Theorem \ref{thm:unified-intro}.

\begin{lemma}[]\label{lem:nondegeneracy}
Let $\varphi$ be the positive radial principal eigenfunction corresponding to eigenvalue $\lambda_1$ of problem \eqref{eq:main-intro}, and let $r_* = r_*(\beta) > 1$ denote the unique critical point where $\varphi'(r_*) = 0$ established in part (b). Then there exist constants $c_0 > 0$ and $\rho_0 > 0$ such that for all $r$ with $|r - r_*| < \rho_0$:
\begin{equation}\label{eq:nondegeneracy}
|\varphi'(r)| \geq c_0|r - r_*|^{1/(p-1)}
\end{equation}
where $c_0$ depends on $\beta$, $\lambda_1$, $g(r_*)$, $\phi(r_*)$, $r_*$, $N$, and $p$.
\end{lemma}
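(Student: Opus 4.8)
The plan is to work with the auxiliary function $F(r) := r^{N-1}|\varphi'(r)|^{p-2}\varphi'(r)$ already introduced in the proof of Theorem \ref{thm:gradient-near-intro}(b), and to extract the non-degeneracy from the fact that $F$ vanishes at $r_*$ to \emph{first order} with a strictly negative, continuous derivative. Recall from \eqref{ge1} that
\begin{equation*}
F'(r) = -\lambda_1 r^{N-1} g(r)\,\varphi(r)^{p-1}.
\end{equation*}
Since $\lambda_1 > 0$, $g(r_*) > 0$ by hypothesis, and $\varphi(r_*) > 0$ by the strong maximum principle (Proposition \ref{prop:smp}), the quantity $A_0 := \lambda_1 r_*^{N-1} g(r_*)\,\varphi(r_*)^{p-1}$ is strictly positive, so $F'(r_*) = -A_0 < 0$. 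Because $g$ is continuous and $\varphi \in C^{1}$, the map $r \mapsto F'(r)$ is continuous, and hence there exists $\rho_0 > 0$ (small enough that $[r_*-\rho_0, r_*+\rho_0] \subset (1,\infty)$, keeping all quantities positive and well defined) such that $F'(r) \leq -A_0/2$ for every $r$ with $|r - r_*| < \rho_0$.

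The next step is to integrate this one-sided bound. Using $F(r_*) = 0$, which holds precisely because $\varphi'(r_*) = 0$, for $r_* < r < r_* + \rho_0$ I would write $F(r) = \int_{r_*}^r F'(s)\,ds \leq -\tfrac{A_0}{2}(r - r_*)$, and symmetrically for $r_* - \rho_0 < r < r_*$ obtain $F(r) = -\int_r^{r_*} F'(s)\,ds \geq \tfrac{A_0}{2}(r_* - r)$. In either case this yields the linear lower bound
\begin{equation*}
|F(r)| \geq \frac{A_0}{2}\,|r - r_*|, \qquad |r - r_*| < \rho_0.
\end{equation*}

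Finally I would undo the substitution. Since $\bigl|\,|\varphi'|^{p-2}\varphi'\,\bigr| = |\varphi'|^{p-1}$, we have $|F(r)| = r^{N-1}|\varphi'(r)|^{p-1}$, whence on $|r - r_*| < \rho_0$,
\begin{equation*}
|\varphi'(r)|^{p-1} \geq \frac{A_0}{2\,r^{N-1}}\,|r - r_*| \geq \frac{A_0}{2\,(r_* + \rho_0)^{N-1}}\,|r - r_*|.
\end{equation*}
Taking $(p-1)$-th roots gives \eqref{eq:nondegeneracy} with the explicit constant $c_0 = \bigl(A_0 / (2 (r_* + \rho_0)^{N-1})\bigr)^{1/(p-1)}$, which depends on $\beta$ (through $r_*$ and $\varphi(r_*)$), $\lambda_1$, $g(r_*)$, $\varphi(r_*)$, $r_*$, $N$, and $p$, as claimed. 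The only genuinely delicate point is conceptual rather than technical: the fractional exponent $1/(p-1)$ is \emph{forced} by the $p$-Laplacian nonlinearity, since $F$ controls $|\varphi'|^{p-1}$ rather than $|\varphi'|$ directly, so the linear (first-order) vanishing of $F$ at $r_*$ translates into exactly the stated degenerate rate for $\varphi'$. Everything else is a continuity argument for $F'$ together with a one-line integration, so I expect no substantive obstacle.
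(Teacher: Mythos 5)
Your proof is correct and follows essentially the same route as the paper's: both work with the auxiliary function $F(r) = r^{N-1}|\varphi'(r)|^{p-2}\varphi'(r)$, combine $F(r_*) = 0$ with $F'(r_*) = -\lambda_1 r_*^{N-1}g(r_*)\varphi(r_*)^{p-1} < 0$ to obtain linear vanishing of $|F|$ near $r_*$, and then take $(p-1)$-th roots after bounding $r^{N-1}$ on the neighborhood. If anything, your step of integrating the one-sided continuity bound $F'(s) \leq -A_0/2$ is slightly more careful than the paper's appeal to a Taylor expansion with $O((r-r_*)^2)$ remainder, which strictly speaking requires more regularity of $F'$ than the $C^1$ smoothness actually available.
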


\begin{proof}
From the analysis in Theorem \ref{thm:gradient-near-intro}(b), the function $F(r) := r^{N-1}|\varphi'(r)|^{p-2}\varphi'(r)$ satisfies:
$$F(r_*) = 0 \quad \text{and} \quad F'(r) = -\lambda_1 r^{N-1}g(r)\varphi^{p-1}(r) < 0 \quad \text{for all } r > 1.$$
In particular:
$$F'(r_*) = -\lambda_1 r_*^{N-1}g(r_*)\varphi^{p-1}(r_*) < 0$$
Since $F$ is continuously differentiable and strictly decreasing with $F(r_*) = 0$, the Taylor expansion yields:
$$F(r) = F'(r_*)(r - r_*) + O((r-r_*)^2)$$
Therefore, for $\rho_0 > 0$ sufficiently small and $|r - r_*| < \rho_0$: 
$$ r^{N-1}|\varphi'(r)|^{p-1}=|F(r)| \geq \frac{|F'(r_*)|}{2}|r - r_*|$$
For $|r - r_*| < \rho_0 \leq r_*\left(2^{1/(N-1)} - 1\right)$, the continuity of $r \mapsto r^{N-1}$ ensures $r^{N-1} \leq 2r_*^{N-1}$. Thus:
$$|\varphi'(r)|^{p-1} \geq \frac{|F'(r_*)|}{2 \cdot 2r_*^{N-1}}|r - r_*| = \frac{\lambda_1 g(r_*)\varphi^{p-1}(r_*)}{4}|r - r_*|$$
Taking the $(p-1)$-th root and setting:
$$c_0 = \left(\frac{\lambda_1 g(r_*)\varphi^{p-1}(r_*)}{4}\right)^{1/(p-1)} > 0$$
yields the desired bound.
\end{proof}

\begin{remark}
The exponent $1/(p-1)$ is characteristic of the $p$-Laplacian operator. When $p = 2$, this reduces to the classical linear bound $|\varphi'(r)| \geq c_0|r - r_*|$ for the standard Laplacian. 
\end{remark}

\vspace{.4cm}

\subsection{Proof of Theorem \ref{thm:critical-intro}}
Having established the existence and uniqueness of $r_*$ in Theorem \ref{thm:gradient-near-intro}, we now derive explicit bounds for the boundary value $\varphi(1)$ and the critical point location $r_*$ in terms of the Robin parameter $\beta$. Both results exploit the fundamental identity arising from integrating the monotone quantity $F(r) = r^{N-1}|\varphi'(r)|^{p-2}\varphi'(r)$ from the boundary to the critical point.

\vspace{.4cm}

\noindent \textbf{(a)} \textit{Bounds on Boundary Value:} First we derive the upper bounds. We leverage the same framework developed in Theorem \ref{thm:gradient-near-intro}(b). Integrating $F'(r) = -\lambda_1 r^{N-1}g(r)\varphi(r)^{p-1}$ from 1 to $r_*$:
$$F(r_*) - F(1) = \int_1^{r_*} F'(s)\,ds.$$
Since $F(r_*) = 0$ (as $\varphi'(r_*) = 0$) and $F(1) = \beta\varphi(1)^{p-1}$ (from the Robin boundary condition $\varphi'(1) = \beta^{1/(p-1)}\varphi(1)$), we obtain the exact identity:
\begin{equation}\label{eq:fundamental_identity}
\beta\varphi(1)^{p-1} = \lambda_1 \int_1^{r_*} s^{N-1}g(s)\varphi(s)^{p-1}\,ds.
\end{equation}
By radial symmetry, the normalization condition becomes:
\begin{equation}\label{eq:radial_normalization}
\omega_{N-1}\int_1^\infty r^{N-1}g(r)\varphi(r)^p\,dr = 1.
\end{equation}
Since $\varphi$ is increasing on $[1, r_*]$, we have $\varphi(s) \geq \varphi(1)$ for all $s \in [1, r_*]$, giving $\varphi(s)^{p-1} = \varphi(s)^p/\varphi(s) \leq \varphi(s)^p/\varphi(1)$. Substituting into the fundamental identity \eqref{eq:fundamental_identity}:
\begin{align*}
\beta\varphi(1)^{p-1} &= \lambda_1\int_1^{r_*} s^{N-1}g(s)\varphi(s)^{p-1}\,ds \leq \frac{\lambda_1}{\varphi(1)}\int_1^{\infty} s^{N-1}g(s)\varphi(s)^p\,ds\\
&< \frac{\lambda_1}{\varphi(1)\omega_{N-1}}
\end{align*}
where the strict inequality follows  by positivity of the integrand.
Next we derive the lower bounds. The assumption $l > N$ ensures $I_g := \int_1^\infty r^{N-1}g(r)\,dr < \infty$.
At the boundary, the Robin condition gives $F(1) = \beta\varphi(1)^{p-1}$. For $r \in [1, r_*]$ where $\varphi' > 0$, the monotonicity $F(r) \leq F(1)$ yields:
$$\varphi'(r) \leq \frac{\beta^{1/(p-1)}\varphi(1)}{r^{(N-1)/(p-1)}}.$$
Integrating this bound from $1$ to $r$:
\begin{eqnarray*}
  \varphi(r)& \leq& \varphi(1)\left[1 + \beta^{1/(p-1)}\int_1^r s^{-(N-1)/(p-1)}\,ds\right]\\ 
  & <&\varphi(1)\left[1 + \beta^{1/(p-1)}\int_1^\infty s^{-(N-1)/(p-1)}\,ds\right].
\end{eqnarray*}
Therefore, we have:
\begin{equation}
    \varphi(r) \leq \varphi(1)\left[1 + \frac{p-1}{N-p}\beta^{1/(p-1)}\right] =: \varphi(1) \cdot K(\beta) \quad \text{for all } r \geq 1. \label{phi_r_bound}
\end{equation}
Applying this global bound to the normalization \eqref{eq:radial_normalization}:
$$1 = \omega_{N-1}\int_1^\infty r^{N-1}g(r)\varphi(r)^p\,dr \leq \omega_{N-1}[\varphi(1)K(\beta)]^p I_g.$$
Solving for $\varphi(1)$ and substituting the definition of $K(\beta)$ yields lower bounds with:
$$C_g = \left(\omega_{N-1}\int_1^\infty r^{N-1}g(r)\,dr\right)^{-1/p} > 0. \qedhere $$

\vspace{.4cm}

\noindent\textbf{(b)} \textit{Bounds on Critical Point:} For the upper bound, since $\varphi$ is increasing on $[1,r_*]$, we have $\varphi(s) \geq \varphi(1)$ and $g(s) \geq g_{\min}$:
$$\int_1^{r_*} s^{N-1}g(s)\varphi(s)^{p-1}\,ds \geq g_{\min}\varphi(1)^{p-1}\frac{r_*^N - 1}{N}.$$
Substituting into \eqref{eq:fundamental_identity} and rearranging yields the upper bound.
For the lower bound, using $\varphi(s) \leq \varphi(r_*)$ and $g(s) \leq g_{\max}$ we get
\begin{equation}
    \int_1^{r_*} s^{N-1}g(s)\varphi(s)^{p-1}\,ds \leq g_{\max}\ \varphi(r_*)^{p-1}\frac{r_*^N - 1}{N}. \label{lower_bound_for_r*}
\end{equation}
Now, from \eqref{phi_r_bound} we have
\begin{equation}
   \varphi(r_*)^{p-1} \leq \varphi(1)^{p-1}\left[1 + \frac{p-1}{N-p}\beta^{1/(p-1)}\right]^{p-1}. \label{lower_bound_for_r*_2}
\end{equation}
Using \eqref{lower_bound_for_r*} and \eqref{lower_bound_for_r*_2}, \eqref{eq:fundamental_identity} become
\begin{equation}
    \beta\varphi(1)^{p-1} \leq g_{\max}  \varphi(1)^{p-1}\left[1 + \frac{p-1}{N-p}\beta^{1/(p-1)}\right]^{p-1} \frac{r_*^N - 1}{N} \label{improved_lower_bound}
\end{equation}
After rearrangement of \eqref{improved_lower_bound}, we get
$$r_*^N \geq  1 + \frac{N\beta}{\lambda_1 g_{\max}\left[1 + \frac{p-1}{N-p}\beta^{1/(p-1)}\right]^{p-1}}.$$

\vspace{.4cm}

\subsection{Proof of Theorem \ref{thm:unified-intro}}

The unified gradient estimates is our most significant contribution, where we obtain global bounds for $\nabla\phi$ using transition functions $\tau(r)$ and critical point modulation $\sigma(r)$. We define transition functions $\tau(r)$ and critical point modulation $\sigma(r)$ as follows  $$\tau(r) = \frac{1}{1 + \left(\frac{r}{L}\right)^{\gamma}} \qquad \sigma(r) = \left(\frac{|r - r_*|}{|r - r_*| + \delta}\right)^{1/(p-1)}.$$ 
We create a three-region partition that handles pre-critical, critical neighborhood, and post-critical regions systematically to obtain this. This analysis reveals the characteristic length scale $L \sim \beta^{-1/(N-1)}$ that quantifies how the Robin parameter controls the penetration depth of boundary-induced gradient variations throughout the exterior domain.


Let $\ell := \min\{L, r_* - 1\}$ and $\delta = \min\{\rho_0/2, \ell/2\}$. We decompose the domain $[1,\infty)$ to three regions $\mathcal{R}_- = [1, r_* - \delta]$, $\mathcal{C} = [r_* - \delta, r_* + \delta]$, and $\mathcal{R}_+ = (r_* + \delta, \infty)$ and obtain the bounds on each of them separately to arrive at our result. The constraint $\delta < \rho_0$ ensures Lemma \ref{lem:nondegeneracy} applies throughout the critical neighborhood  $\mathcal{C}$.

We first derive the lower bound constant $C_1(\beta)$. Define the transition-weighted gradient scale
$$B(r) := \tau(r-1) \beta^{1/(p-1)} + (1 - \tau(r-1)) r^{-(N-1)/(p-1)},$$
and the shape function $g_L(x) = \sigma(|x|) \cdot B(|x|)$.
Our goal is to find a $C_1>0$ such that $|\nabla\phi(x)| \geq C_1 g_L(x)$ for all $x\in B_1^c$.
In each subregion, this yields a constraint on $C_1$, and the final constant one that satisfies all of them. 

On the compact region $\overline{\mathcal{R}}_-$ where $\varphi' > 0$, the function $|\nabla\phi|$ is continuous and strictly positive, hence it attains a positive minimum denoted by $m_-$. Similarly, $g_L$ is continuous on this compact set and attains a  maximum denoted by $M_g^-$. The constraint $m_- \geq C_1 M_g^-$ yields $C_1 \leq m_-/M_g^-$.


The critical neighborhood $\mathcal{C}$ requires careful analysis since $|\nabla\phi(r_*)| = |\varphi'(r_*)|= 0$. 
For any $r \in \mathcal{C}$ with $r \neq r_*$, Lemma \ref{lem:nondegeneracy} gives 
$$|\varphi'(r)| \geq c_0|r - r_*|^{1/(p-1)},$$
while the shape function satisfies
$$g_L(r) = \sigma(r) \cdot B(r) = \left(\frac{|r - r_*|}{|r - r_*| + \delta}\right)^{1/(p-1)} \cdot B(r).$$
Crucially, both $|\nabla\phi(r)|$ and $g_L(r)$ vanish at $r = r_*$ with the same rate $|r - r_*|^{1/(p-1)}$, ensuring their ratio remains bounded throughout $\mathcal{C}$. 
Indeed, for $r \neq r_*$:
$$\frac{|\nabla\phi(r)|}{g_L(r)} \geq \frac{c_0|r - r_*|^{1/(p-1)}}{\left(\frac{|r - r_*|}{|r - r_*| + \delta}\right)^{1/(p-1)} \cdot B(r)} 
= \frac{c_0(|r - r_*| + \delta)^{1/(p-1)}}{B(r)}.$$
This ratio is minimized when $|r - r_*|$ is smallest and $B(r)$ is largest. Since $|r - r_*| + \delta \geq \delta$ throughout $\mathcal{C}$ and $B$ attains its maximum on the compact set $\overline{\mathcal{C}}$, the infimum of the ratio is achieved at the boundary points $r = r_* \pm \delta$. There, we have $\sigma(r_* \pm \delta) = (\delta/(2\delta))^{1/(p-1)} = 2^{-1/(p-1)}$, while the transition-weighted scale satisfies
$$B(r_* \pm \delta) \leq \beta^{1/(p-1)} + (r_* - \delta)^{-(N-1)/(p-1)} =: K_{\mathcal{C}},$$
giving $M_g^c := \displaystyle{\max_{\mathcal{C}} g_L} \leq 2^{-1/(p-1)} K_{\mathcal{C}}$. 
At $r = r_*$, both $|\nabla\phi(r_*)| = 0$ and $g_L(r_*) = 0$, so the inequality holds trivially.
The constraint $c_0\delta^{1/(p-1)} \geq C_1 M_g^c$ yields $C_1 \leq c_0\delta^{1/(p-1)}/M_g^c$.

For the unbounded region $\mathcal{R}_+$ where $\varphi' < 0$, we analyze compact and far-field portions separately. On any compact subset $[r_*+\delta, R]$, the functions $|\nabla\phi|$ and $g_L$ are continuous with $|\nabla\phi| > 0$ throughout (since $r_* \notin [r_*+\delta, R]$), yielding positive minimum $m_+$ and finite maximum $M_g^+$ respectively, which gives the constraint $C_1 \leq m_+/M_g^+$. In the far field as $x \to \infty$, we have $\sigma(|x|) \to 1$ and $\tau(|x|-1) \to 0$, so $g_L(x) \to |x|^{-(N-1)/(p-1)}$, while part (b) gives $|\nabla\phi(x)| \geq \hat{C}_1|x|^{-(N-1)/(p-1)}$, yielding the constraint $C_1 \leq \hat{C}_1$.
Taking the minimum of all four constraints gives
$$C_1(\beta) = \min\left\{ \frac{m_-}{M_g^-}, \frac{c_0\delta^{1/(p-1)}}{M_g^c}, \frac{m_+}{M_g^+}, \hat{C}_1 \right\},$$
which is well-defined and positive since all terms are positive and finite. 

We now derive the upper bound constant $C_2(\beta)$. Define $$g_U(x) = \tau(|x|-1) \beta^{1/(p-1)} + (1 - \tau(|x|-1)) |x|^{-(N-1)/(p-1)}h(|x|),$$ so the upper bound takes the form $|\nabla\phi(x)| \leq C_2 g_U(x)$. Note that $g_U(x) > 0$ everywhere since $g_U(x) \geq (1-\tau(|x|-1))|x|^{-(N-1)/(p-1)}h(|x|) > 0$ for all finite $x$. We seek  $C_2 > 0$ such that this inequality holds throughout $B_1^c$. We proceed as earlier, choosing $C_2>0$ so that $\max |\nabla\phi| \leq C_2 \min g_U$ holds in each subregion.

On the compact region $\overline{\mathcal{R}}_-$, the function $|\nabla\phi|$ attains a finite maximum $M_-$ while $g_U$ attains a positive minimum $m_g^-$ (by continuity and strict positivity), giving the constraint $C_2 \geq M_-/m_g^-$. In the critical neighborhood $\mathcal{C}$, the $C^{1,\alpha}$ regularity established in part (c) of this theorem gives $|\nabla\phi(x)| \leq L_\alpha|x-r_*|^\alpha$ for some H\"{o}lder constant $L_\alpha > 0$, hence $\max_{x \in \mathcal{C}} |\nabla\phi(x)| \leq L_\alpha\delta^\alpha$. Since $g_U$ does not vanish at $r_*$, unlike $g_L$, it attains a positive minimum $m_g^c$ on the compact set $\overline{\mathcal{C}}$, yielding the constraint $C_2 \geq L_\alpha\delta^\alpha/m_g^c$. For the post-critical region $\mathcal{R}_+$, compact portions give constraints analogous to $\mathcal{R}_-$, while in the far field where $g_U(x) \to |x|^{-(N-1)/(p-1)}h(|x|)$ and part (b) gives $|\nabla\phi(x)| \leq \hat{C}_2|x|^{-(N-1)/(p-1)}h(|x|)$, we obtain the constraint $C_2 \geq \hat{C}_2$.
Taking the maximum of all constraints gives
$$C_2(\beta) = \max\left\{ \frac{M_-}{m_g^-}, \frac{L_\alpha\delta^\alpha}{m_g^c}, \frac{M_+}{m_g^+}, \hat{C}_2 \right\},$$
which is well-defined and positive since all terms are positive and finite. 

\vspace{.4cm}

\noindent \textbf{Conflict of Interest:} The authors declare no conflict of interest.


\end{document}